\theoremstyle{definition}
\newtheorem{defn}{Definition}[section]
\newtheorem{theorem}{Theorem}[section]
\newtheorem{prop}{Proposition}[section]
\newtheorem{lemma}{Lemma}[section]
\newtheorem{remark}{Remark}[section]
\DeclareMathOperator{\CC}{\mathbb{C}}
\DeclareMathOperator{\RR}{\mathbb{R}}
\DeclareMathOperator{\OO}{\mathcal{O}}
\renewcommand{\Re}{\operatorname{Re}}
\numberwithin{equation}{section}
\numberwithin{figure}{section}
\begin{document}

\title[A Note on Electrified Droplets]{A Note on Electrified Droplets} 

\date{\today.}

\author[N. Hayford]{Nathan Hayford}
\email{nhayford@usf.edu}
\address{Department of Mathematics and Statistics, University of South Florida, Tampa, FL 33620 USA}

\author[F. Wang]{Fudong Wang}
\email{fudong@math.ucf.edu}
\address{Department of Mathematics, University of Central Florida, Orlando, FL 32816 USA}

\begin{abstract}
We give an in-depth analysis of a $1$-parameter family of electrified droplets first described in \cite{KSV}. We
also investigate a technique for searching for new solutions to the droplet equation, and rederive via this technique a 
$1$ parameter family of physical droplets, which were first discovered by Crowdy \cite{DC}. We speculate on extensions of 
these solutions, in particular to the case of a droplet with multiple connected components.
\end{abstract}

\maketitle

\section{Introduction.}
	Consider a droplet of a perfectly conducting fluid, which has come to equilibrium with an external electric 
	field, with analytic potential (meaning the analytic completion of the potential due to the field) $g(z)$ given by 
		\begin{equation}
			g(z) = E_\infty z + a_0 + \frac{a_1}{z} + \cdots
		\end{equation}
	By making a global rotation, dilation, and translation if necessary, it can be assumed that $E_\infty = 1$,
	$a_0 = 0$. If we consider only the forces of pressure and surface tension (in addition to the force of 
	the external field) to be significant, then the boundary of the droplet must satisfy the equation (cf. \cite{G})
		\begin{equation}\label{dropleteq}
			F(z) = p \bar{z} + i \tau \dot{\bar{z}},
		\end{equation}
	where ``dot" here represents the derivative with respect to arc length, and for  positive constants $p$ 
	(representing the strength of the pressure force), $\tau$ (representing the strength of the
	force due to surface tension), and with the function $F(z)$ related to the analytic potential by
		\begin{equation}
			F(z) = \int (g'(z))^2 dz = z + \frac{2a_1}{z} + \cdots
		\end{equation}
	Following \cite{KSV}, we will refer to $F(z)$ as the \textit{integrated analytic potential}; this is a sensible name, since
	$\Re g(z)$ is the potential of the electric field, and $g(z)$ its analytic completion. For $F$ 
	corresponding to a single-valued electric field, necessarily, we must have that $\sqrt{F'(z)}$ is 
	single-valued. Given $p, \tau$, and $F$ analytic in $\Omega$ (here, $\Omega$ is the complement of the droplet in 
	$\CC$) with a simple pole at infinity, a solution to \eqref{dropleteq} is called a \textit{physical droplet} 
	if $\sqrt{F'(z)}$ is single-valued, and is otherwise called a \textit{mathematical droplet}. 
	
	Until 1999, the only known examples of physical droplets were the circles, as well as a solution discovered in 1955 by E. McLeod \cite{M}, 
	which is described by the conformal map from the disc to the exterior of the droplet:
	\begin{equation}
	    \varphi(w) = \frac{1}{w} + \frac{2}{3}w - \frac{1}{27}w^3,
	\end{equation}
	with the following equation holding on the boundary of the droplet:
	    \begin{equation}
	        F(z) = i\tau \dot{\bar{z}},
	    \end{equation}
	with $\tau = 3$ and $F(z) = \frac{3w^2 + 1}{w(1+w^2/3)}$, and $w = \varphi^{-1}(z)$. In the fluid mechanics community, several solutions 
	to a closely related problem (the so-called ``deep-water capillary wave'' problem) were discovered \cite{Crapper,Kinnersley}, however, no new droplet
	solutions were identified. The direct connection between this problem and the one discussed here may be found in \cite{DC-00}. This connection 
	enabled Crowdy \cite{DC} in 1999 to discover a new 1-parameter family of physical droplets, albeit in a different physical context 
	\footnote{The mathematical content of the problem considered is identical to that considered here; for the direct analogy of these problems, 
	cf. \cite{DC-15}, for example.}; these droplets were also simultaneously discovered (independently) by N. Zubarev \cite{Z1}. In \cite{KSV}, 
	Khavinson et. al. constructed a 1-parameter family of mathematical droplets, containing the McLeod droplet as a special case. However, no new
	physical droplets were discovered. Since then, some progress in finding additional solutions has been made \cite{Z2}, including solutions 
	with additional symmetry properties \cite{WC}. These solutions are the ones which we shall re-derive via the methods we outline below.
    	
	Our investigations are based on two major tools from complex analysis: those
	of quadratic differentials and some facts from the theory of Smirnov classes and Smirnov domains. 
	For the former, we refer the reader to either \cite{S} for a general introduction, or to Appendix \ref{A}, 
	where all facts about quadratic differentials that we need are laid out. For the latter, we refer the reader to the 
	book \cite{D}, or the paper \cite{K}. We briefly overview the basic facts about Smirnov classes and domains
	that will be needed here.
	\begin{defn}
	    Let $\Omega$ be a simply connected domain with smooth boundary. We say an analytic function $f$ belongs to the
	    Smirnov class $E^1(\Omega)$ if $f$ has non-tangential boundary values $f^* \in L^1(\partial \Omega)$ a.e. on the
	    boundary, and if $f$ can be recovered from its boundary values via the Cauchy integral:
	        \begin{equation}
	            f(z) = \int_{\partial \Omega} \frac{f^*(\zeta)}{\zeta - z} \frac{d\zeta}{2\pi i}, \qquad z \in \Omega.
	        \end{equation}
	\end{defn}
	\begin{defn}
	    A simply connected domain $\Omega$ with smooth boundary is called \textit{Smirnov} if, for any conformal map $\varphi(\zeta)$ from $\mathbb{D}$ to $\Omega$, and for any $\zeta \in \mathbb{D}$,
	        \begin{equation}
	            \log |\varphi'(\zeta)| = \frac{1}{2\pi} \int_{\partial \mathbb{D}} \log|\varphi'(w)| 
	            \frac{\partial g_{\mathbb{D}}(w,\zeta)}{\partial n} |dw|,
	        \end{equation}
	   where $g_{\mathbb{D}}(w,\zeta)$ is the Green function for $\mathbb{D}$ with logarithmic singularity at $\zeta$.
	\end{defn}
    In other words, $\varphi'$ can be recovered from the boundary values of $|\varphi'|$ via the Poisson integral 
	formula. This property of Smirnov domains will be used frequently. The above definition should be modified appropriately if 
	$\Omega$ is not simply connected; in this case, the disc should be replaced with the corresponding canonical Koebe domain $G$ 
	(the unit disc with discs excised), and all corresponding objects should also be modified (the conformal map should be from $G$
	to $\Omega$, and the Green's function should be that of $G$).
	
	The outline of the rest of the paper is as follows: in \S2, we briefly review the techniques 
	involved in finding solutions to equation \eqref{dropleteq}; in \S3, we give a detailed analysis of a 
	particular $1$-parameter family of solutions, first discovered in \cite{KSV}. In \S4, we make use of
	the ideas of \S2 to construct a new family of droplets, which generate for us a few new $1$-parameter families
	of solutions to the droplet equation, with the $1$-parameter family from \S2 appearing as a limiting case. 
	We remark that the methods we use generalize, and via a clever symmetry argument found in \cite{WC}, can produce the so-
	called $m$-pole solutions to the droplet equation. In \S5, we give an analysis of the new physical
	droplets of \S4, similar to the analysis of the droplet of \S2 in \S3. Finally, in \S6 we conclude by discussing
	methods for constructing solutions to the droplet equation for droplets with multiple connected components. On this last
	point, we remark that some work has already been done \cite{DC-99,DC-01}, but the study of droplets with multiple connected components,
	and especially the transition from one to two connected components, is still very much an active area of research 
	\cite{DNK-20,DNK-21}.
	
\section{Finding mathematical droplets.}
Let us briefly review the techniques used  in \cite{KSV} to find solutions to \eqref{dropleteq}.  The computations here
are identical to those found in \cite{KSV}, and are reproduced here for the reader's convenience.
The underlying principle for constructing solutions is 
as follows: map the problem onto the disc using a conformal map, and then consider an appropriately chosen 
quadratic differential there. Then, using some tools from the theory of quadratic differentials (see 
Appendix \ref{A}), we can uniquely determine the quadratic differential we constructed, and from it, we 
can reconstruct the conformal map, and thus the droplet. Let $\Omega$ denote the complement of the droplet, 
since it is assumed that the droplet is connected and simply connected, $\Omega$ is simply connected, and we can find a 
conformal map $\varphi: \mathbb{D} \to \Omega$ from the unit disc to $\Omega$, normalized so that $\varphi(0) = \infty$, 
$\text{Res}_{z = 0} \varphi(z) > 0$. We can write $\varphi(w)$ as
	\begin{equation}
		\varphi(w) = R\left(\frac{1}{w} + b_1w + b_2w^2 + \cdots \right),
	\end{equation}
for $R > 0$ (actually, one can show $R = \text{cap}(\gamma)$, the logarithmic capacity of the boundary of 
the droplet $\gamma := \partial \Omega$, but this will not be important to us here). 

We start with a slightly modified version of the droplet equation \eqref{dropleteq}.
Suppose $\gamma$ is a rectifiable Jordan curve, with exterior $\Omega$ being a Smirnov domain. Furthermore, 
let $F$ be in $E^1$ near the boundary of $\Omega$, with
	\begin{equation} \label{modF}
		F(z) = \dot{\bar{z}} \qquad\text{a.e. on $\gamma$.}
	\end{equation}
Suppose further that $F$ has a single pole at a finite point $0 \in \Omega$, and $F(\infty) \neq 0$.
Let $\varphi$ be a conformal map from $\mathbb{D}$ onto $\Omega$ such that $\varphi(0) = \infty$, and
$\varphi(c) = 0$, for some constant $0 < c < 1$.  For such $F$, we have the following proposition:
    \begin{prop}
        Let $F$ satisfy the above hypotheses, and suppose that equation \eqref{modF} holds on the boundary of the droplet.
        Then, there exists a 1-parameter family of mathematical droplets described by the family of conformal maps
            \begin{equation} \label{old-droplet}
                \varphi(w) = w^{-1} + \frac{c}{1-cw} - c^2 w +c_0,
            \end{equation}
        where $c_0$ is chosen so that $\varphi(c) = 0$, satisfying the equation
            \begin{equation}
                G_c(z) = p S(z) + i\tau F(z) = p \bar{z} + i \tau \dot{\bar{z}},
            \end{equation}
        where $p = 1 - c^2$, $\tau = 1 - c^2 + c^4$, and where $S(z) = \bar{z}$ is the Schwarz function \footnote{For further details on the theory of 
Schwarz functions, we refer the reader to \cite{DS,Shapiro}.} of the boundary of the droplet, and 
        $G_c(z)$ is analytic in the complement of the droplet, except for a pole at infinity, and is given in the local 
        coordinate $w = \varphi^{-1}(z)$ by
            \begin{equation}
                \hat{G}_c(w) := G_c(\varphi(w)) = \frac{p_4(w)}{w(c^2w^2-cw+1)},
            \end{equation}
        where $p_4(w)$ is a degree $4$ polynomial in $w$ with coefficients depending on $c$.
        The maps \eqref{old-droplet} are univalent when $0 < c < (\sqrt{5} - 1)/2 = 0.618...$. 
        Furthermore, the droplets are convex for $0 < c < c_1 := (\sqrt{5} - 3)/2 = 0.382...$.
    \end{prop}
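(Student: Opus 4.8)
The plan is to read \eqref{old-droplet} as the output of the quadratic-differential method of \S2 and to \emph{verify}, rather than rediscover, that it solves the droplet equation; the univalence and convexity ranges are then separate, and more delicate, real-analytic estimates.

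First I would transfer \eqref{modF} to the unit disc. Writing $\tilde{\varphi}(w) := \overline{\varphi(1/\bar w)}$ for the reflected map (which here, the coefficients being real, is simply $\varphi(1/w)$), one has $\bar z = \tilde{\varphi}(w)$ on $|w| = 1$, so $S(z) = \tilde{\varphi}(\varphi^{-1}(z))$, and differentiating the boundary parametrization $z = \varphi(e^{i\theta})$ gives the unit tangent together with the key identity
\[
    |\varphi'(w)|^2 = -w^2\varphi'(w)\tilde{\varphi}'(w), \qquad |w| = 1 ,
\]
whence $\dot{\bar z} = i\,w\tilde{\varphi}'(w)/|\varphi'(w)|$. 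Substituting into $G_c = pS + i\tau F$ and squaring removes the modulus: the construction is governed by the rational function $\tilde{\varphi}'(w)/\varphi'(w)$, and the boundary equation becomes the algebraic identity
\[
    \big(\hat G_c(w) - p\,\tilde{\varphi}(w)\big)^2 = -\tau^2\,\frac{\tilde{\varphi}'(w)}{\varphi'(w)},
\]
to be checked as an identity of rational functions in $w$. This is the conceptual heart of the argument, and it is exactly the quadratic-differential condition of \S2: the divisor of $\varphi'\tilde{\varphi}'$ — dictated by the pole of $F$ at $w = c$, the point $w = 0$ mapping to $\infty$, and their reflections across $|w| = 1$ — is what pins the family down, and imposing it reconstructs \eqref{old-droplet}, with $c_0$ fixed by $\varphi(c) = 0$.

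With the map in hand, the next step is to exhibit $\hat G_c$ explicitly and read off the constants. Here I would use the Smirnov hypothesis on $\Omega$ to recover $G_c$ from its boundary values and to exclude spurious inner or singular factors, then track poles: in the coordinate $w$ the only candidate poles of $\hat G_c$ inside $\mathbb{D}$ sit at $w = 0$ (the preimage of $\infty$) and at $w = c$ (the preimage of the finite pole of $F$). Demanding that $G_c$ be analytic in $\Omega$ away from $\infty$ forces the residues at $w = c$ to cancel, giving one relation between $p$ and $\tau$; a second relation comes from the normalization of the integrated potential at infinity. Solving the two yields $p = 1 - c^2$ and $\tau = 1 - c^2 + c^4$, and the surviving expression is the claimed $\hat G_c(w) = p_4(w)/[\,w(c^2 w^2 - cw + 1)\,]$, whose nontrivial denominator roots lie at $|w| = 1/c > 1$, so that the only pole in $\mathbb{D}$ is the simple pole at $w = 0$.

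The remaining assertions — the univalence range $0 < c < (\sqrt5 - 1)/2$ and the convexity range $0 < c < c_1$ — I expect to be the main obstacle, since they demand sharp location of roots rather than formal identities. For univalence I would study the zeros of $\varphi'$, i.e. the roots of its quartic numerator, determine the value of $c$ at which the first such root enters $\mathbb{D}$ (verifying in tandem that the image curve $\varphi(e^{i\theta})$ stays a simple closed curve up to that value), and identify the threshold with the golden-ratio value. For convexity I would compute the boundary curvature through the sign of $\Re\!\big[1 + w\varphi''(w)/\varphi'(w)\big]$ on $|w| = 1$ and locate where it first changes sign, obtaining $c_1 = (3 - \sqrt5)/2 = \big((\sqrt5 - 1)/2\big)^2$; that the convexity threshold is the square of the univalence threshold is a convenient internal check. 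The delicate part throughout is controlling these one-parameter families of quartics uniformly in $c$ and cleanly separating loss of univalence from mere loss of convexity.
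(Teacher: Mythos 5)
Your strategy for the boundary equation and for the constants $p$, $\tau$ is essentially the paper's own argument in verification form. The squared identity $\bigl(\hat G_c(w)-p\,\tilde\varphi(w)\bigr)^2=-\tau^2\,\tilde\varphi'(w)/\varphi'(w)$ is a correct algebraic restatement of the positivity of the quadratic differential $F^2(z)\,dz^2$ on $|w|=1$; the Smirnov hypothesis enters for the same reason as in the paper (there it is used to recover $\varphi'$ from its boundary modulus, ruling out singular inner factors); and your residue cancellation at $w=c$ is exactly how the paper fixes $p=1-c^2$, $\tau=1-c^2+c^4$, the overall scale being free. Your convexity plan is also equivalent to the paper's: the sign of $\Re\bigl[1+w\varphi''(w)/\varphi'(w)\bigr]$ on $|w|=1$ and the Schwarz-function curvature formula (Lemma \ref{curvature-lemma}) lead to the same sign analysis of a polynomial in $\cos\theta$ and the same threshold $c_1=(3-\sqrt{5})/2$.

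The genuine gap is the univalence argument, which you yourself identify as the main obstacle. You propose to find the threshold by determining ``the value of $c$ at which the first root of the quartic numerator of $\varphi'$ enters $\mathbb{D}$.'' For this family that never happens: the construction itself gives
\begin{equation*}
\varphi'(w)=\frac{(1-\bar{A}w)^2(1-\bar{B}w)^2}{w^2(1-cw)^2},\qquad A=ce^{\pi i/3},\ B=ce^{-\pi i/3},
\end{equation*}
so the numerator is the perfect square $(c^2w^2-cw+1)^2$ and the only zeros of $\varphi'$ are the double points $w=e^{\pm i\pi/3}/c$, of modulus $1/c>1$ for every $0<c<1$. Thus $\varphi'$ is zero-free on $\overline{\mathbb{D}}$ throughout the whole parameter range, local injectivity never fails, and your criterion detects nothing before $c=1$, rather than the true threshold $c^*=(\sqrt{5}-1)/2\approx 0.618$. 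Loss of univalence here is a purely global phenomenon: the boundary curve $\varphi_c(e^{i\theta})$ develops a self-intersection while $\varphi'$ stays nonvanishing. Consequently the step you relegate to a parenthetical --- ``verifying in tandem that the image curve stays a simple closed curve'' --- is not a side check but the entire content of the proof, and your proposal offers no method for it. The paper's \S3 does precisely this work: it counts intersections of $\Gamma(c)=\varphi_c(\partial\mathbb{D})$ with the family of vertical lines (reducing, after the substitution $t=\cos\theta$, to root counts and discriminant signs for a quadratic in $t$, split into three parameter stages), concludes that $\Gamma(c)$ is a Jordan curve exactly for $0\le c<c^*$, and only then invokes the argument principle. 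Without an analysis of this kind, the claimed univalence range --- and with it the assertion that \eqref{old-droplet} describes droplets at all for $c_1<c<c^*$ --- is unsupported. (Your observation that $c_1=(c^*)^2$ is correct, but it is a numerical coincidence and cannot serve as a check on either threshold.)
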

We point out a notation that we will make use of frequently; when expressing $G_c(z)$ (or any function, for that matter) in the local
variable $w = \varphi^{-1}(z)$, we write $\hat{G}_c(w) := G_c(\varphi(w))$.
\begin{proof}
We postpone the proofs of univalency of the map $\varphi(w)$ and convexity of the curve in the given parameter range until \S3.
We also will drop the constant $c_0$ from our exposition, which violates the condition $\varphi(c) = 0$, but makes all formulas 
look cleaner. Since one can recover the desired map by a global translation of the droplet, there is no ambiguity. 
Now, we consider the quadratic differential
	\begin{equation} \label{QD2}
		Q := F^2(z) dz^2,	
	\end{equation}
defined on the complement of the droplet. Using the conformal map, we can write
	\begin{equation} \label{mappedQD2}
		Q = Q(w)dw^2:= F^2(\varphi(w)) (\varphi'(w))^2 dw^2,	
	\end{equation}
which is a well-defined quadratic differential on the disc, with a pole of order $2$ at $w = c$, and a pole of 
order $4$ at $w = 0$. Moreover, since $Q = ds^2 > 0$ on the circle, it can be continued by reflection to the whole
Riemann sphere. To summarize, $Q$ is a quadratic differential with the following properties:
	\begin{enumerate}
		\item The only poles of $Q$ in $\mathbb{D}$ are a pole of order $2$ at $w = c$, and a pole of 
		order $4$ at $w = 0$.
		\item $Q$ is positive for $|w| = 1$.
	\end{enumerate}
It follows that $Q$ necessarily takes the form
	\begin{equation} \label{QD2-form}
		Q = -C^2 \frac{(w-A)^2(1-\bar{A}w)^2(w-B)^2(1-\bar{B}w)^2}{(w-c)^2(1-\bar{c}w)^2 w^4}dw^2,
	\end{equation}
for undetermined constants $C > 0$, $|A| < 1$, $|B| < 1$ (\textit{For a sketch of the proof of this fact, see 
Appendix \ref{A}}). Thus, equations \eqref{QD2}, \eqref{mappedQD2}, and \eqref{QD2-form} imply that
	\begin{equation}
		|w^2 \varphi'(w)| = C \frac{|(1-\bar{A}w)(1-\bar{B}w)|^2}{|1-cw|^2} \qquad \text{a.e. $|w| = 1$.}
	\end{equation}
Now, since the droplet has rectifiable boundary, and $\varphi'$ is in class $E^1$ near the boundary, and $\Omega$ is Smirnov domain, we can recover 
$\varphi'(w)$ from its a.e. boundary values, via the Poisson integral formula:
	\begin{equation}
		\varphi'(w) = w^{-2}\frac{(1-\bar{A}w)^2(1-\bar{B}w)^2}{(1-cw)^2}.
	\end{equation}
Integrating, we obtain that:
	\begin{equation}
		 \varphi(w) = C\left( w^{-1} -\alpha_1 \log(w) - \frac{\alpha_2}{1-cw} + \alpha_3 \log(1-cw) 
		 - \alpha_4 w\right) + c_0,
	\end{equation}
for some integration constant $c_0$; since $c_0$ only contributes an overall translation of the droplet and does
not change its shape, we set it to be zero. The constants $\alpha_1$, $\alpha_2$, $\alpha_3$, and $\alpha_4$ are:
\begin{align*}
	\alpha_1 &= 2(c - \bar{A} - \bar{B}),\\
	\alpha_2 &= c^{-3}\left(c^4 - 2(\bar{A} +\bar{B})c^3 + (\bar{A}^2 + 4\bar{A}\bar{B} + \bar{B}^2)c^2
	 -2\bar{A}\bar{B}(\bar{A} +\bar{B})c + \bar{A}^2\bar{B}^2\right),\\
	\alpha_3 &= -2c^{-3}\left(c^4 -(\bar{A} + \bar{B})c^3 + \bar{A}\bar{B}(\bar{A} +\bar{B})c 
	- \bar{A}^2\bar{B}^2\right),\\
	\alpha_4 &= c^{-2}\bar{A}^2\bar{B}^2.
\end{align*}
Since $\varphi$ is single-valued, we must have that $\alpha_1 = 0$, i.e., $A + B = c$. This reduces the complexity
of the other $\alpha_i$'s:
\begin{align*}
	\alpha_2 &= \frac{\bar{A}^2\bar{B}^2}{(\bar{A}+ \bar{B})^3},\\
	\alpha_3 &= \frac{2\bar{A}\bar{B}(\bar{A}^2 + \bar{A}\bar{B} +\bar{B}^2)}{(\bar{A}+ \bar{B})^3},\\
	\alpha_4 &= \frac{\bar{A}^2\bar{B}^2}{(\bar{A}+ \bar{B})^2}.
\end{align*}
Since it is also required that $\alpha_3$ vanishes, one possibility (and the case that will be considered in the
next section) will be when $A^2 + AB + B^2 = 0$. Note that the only other choices are $A = 0$ or $B = 0$, which
both lead to the circle as a solution; we are not interested in this here. The condition $A^2 + AB + B^2 = 0$, 
along with the condition that $A + B = c$ implies that $A = ce^{\pi i/3}$, $B = ce^{-\pi i/3}$, and thus, 
setting $C = 1$, that
	\begin{equation} \label{conformalmap}
		\varphi(w) = w^{-1} + \frac{c}{1-cw} - c^2 w.
	\end{equation}
We now show that $\varphi(w)$ satisfies equation \eqref{dropleteq} with $p = 1 - c^2$, $\tau = 1 - c^2 + c^4$, and
some $F$ analytic in the complement of the droplet $\Omega$ except for a simple pole at infinity. This can be done
by writing $\bar{z} = S(z)$, the Schwarz function of the boundary of $\Omega$. In terms of the conformal map
$z = \varphi(w)$, the Schwarz function is
	\begin{equation}
		S(z) = \varphi(1/\varphi^{-1}(z)) = w - \frac{cw}{w-c} - \frac{c^2}{w},
	\end{equation}
where $w = \varphi^{-1}(z)$. Furthermore, the solution to equation \eqref{modF} can be determined, using equations
\eqref{mappedQD2}, \eqref{QD2-form}, and \eqref{conformalmap}:
	\begin{equation}
		F(z) = \frac{\sqrt{Q(w)}}{\varphi'(w)} = -i\frac{(w^2 -cw + c^2)(1-cw)}{(1-cw+c^2w^2)(w-c)}. 
	\end{equation}
Note that $F(z)$, $S(z)$ have simple poles at $z = 0, \infty$; the residues of these functions at $z = 0$ are:
	\begin{equation}
		\underset{z=0}{\text{Res }} S(z) = \frac{(1-c^2+c^4)^2}{(1-c^2)^2},
		\qquad \underset{z=0}{\text{Res }} F(z) = i\frac{1-c^2+c^4}{1-c^2}.
	\end{equation}
Therefore, the function $G_c(z)$, defined by
	\begin{equation}
		G_c(z) = (1-c^2) S(z) + i(1-c^2+c^4) F(z)
	\end{equation}
is analytic in the complement of the droplet, except for a simple pole at infinity. Additionally, because of how
$F(z)$, $S(z)$  were defined, on the boundary of the droplet, we have that
	\begin{equation}
		G_c(z) = p \bar{z} + i\tau \dot{\bar{z}},
	\end{equation}
where $p = 1 - c^2$, $\tau = 1 - c^2 + c^4$.
\end{proof}

\begin{remark}
    We remark that none of the droplets above are physical, as $\sqrt{G_c'(z)}$ cannot
    be defined in a single-valued way in the complement of the droplet. $\hat{G}_c(w)$ can be written as
    \begin{equation}
        \hat{G}_c(w) = \frac{p_4(w)}{w(c^2w^2-cw+1)},
    \end{equation}
    for some degree $4$ polynomial. By some rather involved computations, one finds that (in the local variable $z = \varphi^{-1}(w)$:)
    \begin{equation}
        G_c'(z) = \frac{2(c^2-1)(1-cw)^2(c^2/2w^4 + (c^4+1)w^2+c^2/2))}{(c^2w^2-cw+1)^2w^2}.
    \end{equation}
    (Here, $G'_c(z) = \hat{G}'_c(w)/\varphi'(w)$, with the $'$ on the right hand side denoting the derivative with respect to $w$).
    The above formula tells us that the only obstruction in defining $\sqrt{G_c'(z)} = \sqrt{\hat{G}'_c(w)/\varphi'(w)}$ in a single-valued
    way on the disc in the $w$-plane is if the polynomial $p(w) := (c^2/2w^4 + (c^4+1)w^2+c^2/2))$ has roots (of multiplicity 1) 
    in the disc. Indeed, the roots of $p(w)$ are
        \begin{equation}
            w_{\pm} = \frac{i}{c}\sqrt{1+c^4\pm\sqrt{c^8+c^4+1}},
        \end{equation}
    and $-w_{\pm}$. The roots $w_+, -w_+$ are always of modulus greater than 1; however, the roots $w_-, -w_-$ always lie in the disc, and never coincide; thus, $\sqrt{G_c'(z)}$ cannot be defined in a single-valued way in the complement of the droplet.
\end{remark}

\section{A particular 1-parameter family.}
Here, we will analyze the $1$-parameter family of mathematical droplets from the previous section, which are
characterized by the conformal map $\varphi_c(w)$ from the unit disc to the complement of the droplet, $\Omega$:
	\begin{equation} \label{droplet}
		\varphi_c(w) = \frac{1}{w} - \frac{c}{1-cw} - c^2 w.
	\end{equation}
In this section, we will prove the following proposition:
\begin{prop} \label{3.1}
The map $\varphi_c: \mathbb{D} \to \Omega$ is univalent for $0 \leq c < c^* := (\sqrt{5}-1)/2$; furthermore,
the boundary curve is convex only for $0 \leq c < c_1 = (3 - \sqrt{5})/2$.
\end{prop}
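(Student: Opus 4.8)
The plan is to establish univalence and convexity by analyzing the boundary map $\varphi_c(e^{i\theta})$ directly, using standard criteria from geometric function theory. For univalence, since $\varphi_c$ is analytic on $\mathbb{D}$ except for the simple pole at $w=0$ (and the pole at $w=1/c$ lies outside the closed disc for $0<c<1$), the natural approach is to apply the boundary correspondence principle: a function analytic on $\overline{\mathbb{D}}\setminus\{0\}$ with a simple pole at the origin is univalent on $\mathbb{D}$ if and only if it maps $\partial\mathbb{D}$ homeomorphically onto a simple closed curve (the argument principle then guarantees injectivity in the interior). So the first step is to parametrize $z(\theta)=\varphi_c(e^{i\theta})$ and show the boundary curve is a simple (non-self-intersecting) Jordan curve precisely on the range $0\le c<c^*$.

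**Next I would** reduce the self-intersection question to a computation involving $\varphi_c'$. The cleanest sufficient route is to show that the tangent direction to the boundary curve winds monotonically: if $\arg\left(\tfrac{d}{d\theta}\varphi_c(e^{i\theta})\right)$ is strictly monotonic (equivalently, if $\varphi_c(\overline{\mathbb{D}})$ is a \emph{convex} image), univalence is automatic. But convexity fails past $c_1$, so for the larger range $c_1\le c<c^*$ I must argue univalence by a more delicate self-intersection analysis. I expect the critical threshold $c^*=(\sqrt5-1)/2$ to arise exactly when the boundary curve first develops a cusp or a tangential self-contact — analytically, this should correspond to $\varphi_c'(w)$ acquiring a zero on $\partial\mathbb{D}$. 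Indeed, from the factorization in Section 2, $\varphi_c'(w)=w^{-2}(1-\bar A w)^2(1-\bar B w)^2/(1-cw)^2$ with $A=ce^{i\pi/3}$, $B=ce^{-i\pi/3}$; the first step is therefore to locate when the zeros $w=1/\bar A,\,1/\bar B$ of the numerator cross the unit circle, i.e.\ when $|A|=|B|=c$ forces $|1/\bar A|=1$, giving $c=1$ — so this is not the mechanism, and instead the breakdown of univalence is a genuinely global (geometric) crossing rather than a local cusp.

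**For convexity** the approach is completely standard and I expect it to be the easier half: a conformal map $\varphi$ of $\mathbb{D}$ has convex image if and only if
\begin{equation}
\Re\left(1+\frac{w\,\varphi_c''(w)}{\varphi_c'(w)}\right)\ge 0 \qquad \text{for all } w\in\partial\mathbb{D}.
\end{equation}
(One must adapt this criterion to account for the pole at the origin, tracking orientation carefully, but the principle is unchanged.) Substituting $w=e^{i\theta}$ and the explicit form of $\varphi_c$, this inequality becomes a trigonometric polynomial condition in $\theta$ depending on $c$; the convexity range terminates exactly when this expression first touches zero for some $\theta$. I would compute the minimum over $\theta$ of the left-hand side as a function of $c$ and show it stays nonnegative precisely for $0\le c<c_1=(3-\sqrt5)/2$, with equality (an inflection point on the boundary) occurring at $c=c_1$.

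**The main obstacle** will be the global univalence argument on the intermediate range $c_1\le c<c^*$, where convexity no longer applies but the map remains injective. Here I anticipate needing either an explicit reduction of $\varphi_c(e^{i\theta_1})=\varphi_c(e^{i\theta_2})$ to an algebraic equation whose only solutions in $0<c<c^*$ are $\theta_1=\theta_2$, or an appeal to a univalence criterion such as the Noshiro--Warschawski condition or a winding-number computation for the boundary curve. Identifying the precise geometric event at $c=c^*$ — presumably a first self-tangency of the boundary curve, which should translate into a resultant or discriminant condition yielding the golden-ratio value $(\sqrt5-1)/2$ — is the delicate computational core, and I would verify it by checking that at $c=c^*$ two distinct boundary points coalesce, while for $c<c^*$ no such coincidence exists.
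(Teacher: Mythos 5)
Your proposal follows essentially the same route as the paper: univalence is proved by showing $\varphi_c(\partial\mathbb{D})$ is a Jordan curve and invoking the argument principle (convexity disposing of $0\le c<c_1$, and an explicit self-intersection analysis of the boundary handling $c_1\le c<c^*$, which the paper organizes by counting intersections of the reflection-symmetric curve with vertical lines $\Re z=a$ via a quadratic in $\cos\theta$), while convexity is a boundary sign condition reduced to a polynomial inequality in $\cos\theta$ --- the paper phrases it through the signed curvature computed from the Schwarz function, which is equivalent to your $\Re\bigl(1+w\varphi_c''/\varphi_c'\bigr)$ criterion. Your diagnosis of the degeneration at $c^*$ is also the correct one: the failure is a global coalescence of the two real boundary points $\varphi_c(1)$ and $\varphi_c(-1)$ (which indeed occurs exactly when $c^2+c-1=0$), not a cusp, since $\varphi_c'$ has no zeros on $\overline{\mathbb{D}}$ for $c<1$.
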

The content of this proposition is illustrated by Figure \ref{fig: shape of droplet evo}. The proof of this 
proposition, while straightforward, is rather tedious. For showing univalency of $\varphi_c$, the scheme of the 
proof follows that of most in the theory of univalent functions: show that the image of the boundary is a closed, 
Jordan curve, and conclude using argument principle that $\varphi_c$ is univalent. Convexity of $\varphi_c$ in the 
given parameter range is shown by a similar analysis of the boundary curve. 
We begin by analyzing the convexity of the boundary curve. 
We will utilize the following fact: a plane curve is convex if and only if its (signed) curvature has constant sign. We use the following lemma to help us derive an expression for the curvature in the
local variable $w$:
\begin{lemma}\label{curvature-lemma}
    Let $\gamma$ be a simple, closed curve, $\varphi: \mathbb{D} \to \Omega$, with $\Omega$ unbounded and $\partial \Omega =\gamma$, $z = \varphi(w)$. Then, in the variable $w$, the signed curvature
    of $\gamma$, $\kappa(\varphi(w)) =\hat{\kappa}(w)$, admits the expression
        \begin{equation}
        \hat{\kappa}(w) = 
            \frac{i}{2} \sqrt{ \frac{\varphi'(w)}{\hat{S}'(w)} } \left( \frac{\hat{S}''(w)}{\hat{S}'(w) \varphi'(w)} - 
            \frac{\varphi''(w)}{\varphi'(w)^2}\right),
        \end{equation}
    where $\hat{S}(w) := S(\varphi(w))$, and $S(z)$ is the Schwarz function of $\Omega$. All derivatives in the above formula are taken with respect to the local variable $w$.
\end{lemma}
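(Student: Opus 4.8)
The plan is to start from the standard formula for the signed curvature of a plane curve and express everything in terms of the conformal map $\varphi$ and the Schwarz function $S$. Recall that for a curve parametrized by arc length, the signed curvature is $\kappa = \Im(\ddot{\bar z}\,\dot z)$ (equivalently $\kappa = -i\ddot z/\dot z$ when $|\dot z|=1$, read off as a real quantity), so the first step is to rewrite the arc-length derivatives $\dot z, \ddot z$ in terms of $w$-derivatives. The key relation I would exploit is that on the boundary curve $\bar z = S(z)$, so that $\dot{\bar z} = S'(z)\dot z$; combined with $|\dot z|^2 = \dot z\,\dot{\bar z} = 1$ this gives $\dot z^2 = 1/S'(z)$, the fundamental identity relating the unit tangent to the Schwarz function. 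Pulling this back through $z=\varphi(w)$ and using the chain rule $S'(z) = \hat S'(w)/\varphi'(w)$, I get $\dot z = \sqrt{\varphi'(w)/\hat S'(w)}$ up to a sign/branch choice.

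Next I would differentiate once more with respect to arc length to obtain $\ddot z$. Writing $\frac{d}{ds} = \dot w\,\frac{d}{dw}$ and noting $\dot w = \dot z/\varphi'(w)$, the acceleration $\ddot z = \dot w\,\frac{d}{dw}\bigl(\varphi'(w)\dot w\bigr)$ can be expanded into terms involving $\varphi'$, $\varphi''$, and the $w$-derivatives of $\dot z$. Since $\dot z^2 = \varphi'/\hat S'$, logarithmic differentiation gives $2\,\frac{d\dot z}{dw}/\dot z = \varphi''/\varphi' - \hat S''/\hat S'$, which is exactly the combination appearing inside the parentheses of the claimed formula. Assembling $\kappa = -i\,\ddot z/\dot z$ and simplifying, the prefactor $\dot z$ produces the square-root factor $\tfrac{i}{2}\sqrt{\varphi'(w)/\hat S'(w)}$ and the bracketed term collects the second-derivative data. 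I would be careful to track the factor of $\tfrac12$ coming from the logarithmic derivative and the overall factor of $i$ coming from the definition of signed curvature, since these are where sign and normalization errors typically creep in.

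The main obstacle I expect is bookkeeping of branches and signs rather than any conceptual difficulty: the identity $\dot z = \sqrt{\varphi'/\hat S'}$ involves a choice of square root, and I must verify that this choice is consistent with the orientation of $\gamma$ (so that the outward normal points into $\Omega$) and that the final expression is genuinely real-valued, as a signed curvature must be. Checking reality amounts to verifying that $\hat\kappa(w)$ equals its own complex conjugate on $|w|=1$, using the boundary relation between $\hat S$ and $\varphi$; this serves as a useful self-consistency check on the formula. A secondary subtlety is that the Schwarz function identity $\bar z = S(z)$ holds only on $\gamma$, so all the arc-length differentiations are a priori valid only on the boundary—but since the resulting expression is a meromorphic identity in $w$, it extends by analytic continuation, which is precisely what makes the formula useful for the convexity analysis in the rest of the section.
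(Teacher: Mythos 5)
Your proposal is correct and takes essentially the same route as the paper: both start from the complex form of the signed curvature (your $-i\ddot z/\dot z$ coincides with the paper's $i\ddot{\bar z}/\dot{\bar z}$, since differentiating $\dot z\,\dot{\bar z}=1$ gives $\ddot z\,\dot{\bar z}=-\dot z\,\ddot{\bar z}$), exploit the Schwarz-function identity $\dot z^2=1/S'(z)$ (the paper's $\dot{\bar z}=\sqrt{S'(z)}$), and transfer to the variable $w$ via $S'(z)=\hat S'(w)/\varphi'(w)$, the only cosmetic difference being that the paper first writes $\kappa=\tfrac{i}{2}S''(z)/S'(z)^{3/2}$ in the $z$-variable and then pulls back, while you differentiate in $w$ directly. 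One small slip worth fixing: the standard formula is $\kappa=\Im(\dot{\bar z}\,\ddot z)$, not $\Im(\ddot{\bar z}\,\dot z)$ (these differ by a sign); since the expression you actually use, $-i\ddot z/\dot z$, equals the former, nothing downstream is affected.
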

\begin{proof}
    First, the signed curvature in terms of the arc length parameter $s$ on $\gamma$ can be written as
        \begin{equation}
            \kappa(s) = i\ddot{\bar{z}}/\dot{\bar{z}}.
        \end{equation}
    Furthermore, since we can write $\bar{z}$ in terms of the Schwarz function $\bar{z} = S(z)$ , we have the following 
    identities, using the chain rule and the identity $|\dot{z}| = 1$:
        \begin{equation}
            \dot{\bar{z}} = \sqrt{S'(z)}, \qquad \ddot{\bar{z}} = \frac{1}{2}\frac{S''(z)}{S'(z)} \qquad \Rightarrow
            \qquad \kappa(z) = \frac{i}{2}\frac{S''(z)}{S'(z)^{3/2}}.
        \end{equation}
    We would like to express the curvature in terms of the local variable $w = \varphi^{-1}(z)$; in this variable, we obtain the
    formulae:
        \begin{equation}
             S'(z) = \frac{\hat{S}'(w)}{\varphi'(w)}, \qquad S''(z) = \hat{S}''(w)(\varphi'(w))^{-2} -
             \hat{S}'(w) \frac{\varphi''(w)}{\varphi'(w)^3}.
        \end{equation}
    (By an abuse of notation, we denote derivatives with respect to $z$ and $w$ both by prime). Thus, in the local coordinate 
    $w$, the curvature is
        \begin{equation}
            \hat{\kappa}(w) = 
            \frac{i}{2} \sqrt{ \frac{\varphi'(w)}{\hat{S}'(w)} } \left( \frac{\hat{S}''(w)}{\hat{S}'(w) \varphi'(w)} - 
            \frac{\varphi''(w)}{\varphi'(w)^2}\right).
        \end{equation}
\end{proof}

Let $w = e^{i\theta}$, $-\pi \leq \theta \leq \pi$, and set
$\varphi(e^{i\theta}) := x(\theta) + i y(\theta)$.
It is clear from the definition that $x(\theta) = x(-\theta)$, $y(\theta) = -y(-\theta)$. Thus, the boundary curve is 
symmetric with respect to the $x$-axis, and we have only to check that the curvature does not change sign for
$0 \leq \theta \leq \pi$. Using Lemma \eqref{curvature-lemma}, our expression for the conformal map $\varphi(w)$, and the fact that $S(w) = \varphi(1/w)$ in the local coordinate $w$, we obtain the 
following expression for the curvature:
    \begin{equation}
        \hat{\kappa}(w) = \frac{w^3\left((2c^5-2c)w^2 + (1+4c^2-4c^4-c^6)w + (2c^5-2c) \right)}
        {(c^2w^2 -cw +1)^2(c^2-cw+w^2)^2},
    \end{equation}
or, in setting $w = e^{i\theta}$,
    \begin{equation} \label{curvature1}
        \hat{\kappa}(e^{i\theta}) = (c^2-1)\frac{(4c^3+4c)\cos \theta -c^4-5c^2-1}
        {(4c^2\cos^2 \theta - (2c^3+2c)\cos \theta +c^4-c^2 +1)^2}.
    \end{equation}
The denominator of \eqref{curvature1} clearly has constant sign; thus, it suffices to 
check that the numerator also has constant sign. Let $t = \cos\theta$, and consider the polynomial 
    \begin{equation}
        p(t) = (4c^3+4c)t -c^4-5c^2-1.
    \end{equation}
We must check that $p(t)$ has no zeros in $[-1,1]$, so that $p(\cos\theta)$ maintains constant
sign for $0 \leq \theta \leq \pi$. $p(t)$ is linear, and thus monotone; therefore, we must
find the solutions to $p(1) = 0$, i.e., the first value of $c$ for which $p$ vanishes in $[-1,1]$. The equation $p(1) = 0$ yields
    \begin{equation*}
        c^4 - 4c^3 + 5c^2-3c+1 = (c^2-3c+1)(c^2-c+1);
    \end{equation*}
the solution lying in the interval $[-1,1]$ is $(3-\sqrt{5})/2 = 0.38197...$. This shows that the droplet is convex in the range specified in Proposition \ref{3.1}.

\begin{prop}
    For $0 \leq c < c^* = \frac{\sqrt{5}-1}{2}$, $\varphi_c(w)$ is univalent.
\end{prop}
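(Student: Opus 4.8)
The plan is to follow the scheme announced before the statement: reduce univalency of $\varphi_c$ to the assertion that the image of the unit circle is a Jordan curve, and then invoke the argument principle. The map $\varphi_c$ is meromorphic on $\overline{\mathbb{D}}$ with a single simple pole at $w = 0$, and a direct computation gives
$$\varphi_c'(w) = -\frac{(c^2 w^2 - cw + 1)^2}{w^2(1-cw)^2},$$
whose only zeros are at $w = e^{\pm i\pi/3}/c$, of modulus $1/c > 1$. Hence $\varphi_c' \neq 0$ on $\overline{\mathbb{D}} \setminus \{0\}$ for every $c \in (0,1)$, so $\varphi_c$ restricts to a regular (immersed) curve on $\partial \mathbb{D}$; if this curve is also injective, it is a smooth Jordan curve $\Gamma_c$. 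Granting this, the argument principle applied to $\varphi_c - z_0$ shows that the number of preimages of $z_0 \notin \Gamma_c$ equals $1 + n(\Gamma_c, z_0)$, the extra $1$ accounting for the pole. On the unbounded complementary component the winding number is $0$, giving exactly one preimage; on the bounded component it is $\mp 1$, and since the count is nonnegative it must be $0$ there. Thus $\varphi_c : \mathbb{D} \to \Omega$ is univalent, and the whole content reduces to showing that $\theta \mapsto \varphi_c(e^{i\theta})$ is injective precisely when $0 \le c < c^*$.

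To detect collisions on the circle I would use the algebraic identity, valid for all $w_1 \neq w_2$,
$$\varphi_c(w_1) - \varphi_c(w_2) = (w_2 - w_1)\left[\frac{1}{w_1 w_2} + \frac{c^2}{(1-cw_1)(1-cw_2)} + c^2\right],$$
obtained by combining the three terms of $\varphi_c$ over common denominators. Thus $\varphi_c(w_1) = \varphi_c(w_2)$ with $w_1 \neq w_2$ is equivalent to vanishing of the bracket; writing $s = w_1 + w_2$, $p = w_1 w_2$ and clearing denominators, this becomes the polynomial relation
$$1 - cs + 3c^2 p - c^3 s p + c^4 p^2 = 0.$$
For $w_1, w_2$ on the circle I would set $w_{1,2} = e^{i(\sigma \pm \delta)}$, so $p = e^{2i\sigma}$ and $s = 2\cos\delta\, e^{i\sigma}$. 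Substituting and multiplying through by $e^{-2i\sigma}$, the equation becomes $(e^{-2i\sigma} + c^4 e^{2i\sigma}) - 2c\cos\delta\,(e^{-i\sigma} + c^2 e^{i\sigma}) + 3c^2 = 0$, which splits cleanly into its real and imaginary parts.

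The imaginary part factors as $(c^2 - 1)\bigl[(c^2+1)\sin 2\sigma - 2c\cos\delta\,\sin\sigma\bigr] = 0$; since $c \neq 1$, and handling the degenerate case $\sin\sigma = 0$ separately (there $w_2 = \bar{w}_1$, forcing $\varphi_c(w_1) \in \mathbb{R}$ and hence $w_1 = \pm 1$, i.e.\ only the two points where $\Gamma_c$ meets its axis of symmetry), one obtains $\cos\delta = (1+c^2)\cos\sigma/c$. Feeding this into the real part and using $\cos 2\sigma = 2\cos^2\sigma - 1$ together with $(1+c^4) - (1+c^2)^2 = -2c^2$, all cross terms collapse and one is left with
$$\cos^2\sigma = \frac{3c^2 - 1 - c^4}{4c^2} = -\frac{c^4 - 3c^2 + 1}{4c^2}.$$
Since $(c^*)^2 = (3-\sqrt{5})/2$ is exactly the smaller root of $c^4 - 3c^2 + 1$, this quantity is negative for $0 \le c < c^*$, so the equation has no real solution $\sigma$; no off-axis collision occurs, $\Gamma_c$ is a Jordan curve, and by the first paragraph $\varphi_c$ is univalent.

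The main obstacle is purely one of bookkeeping: organizing the algebra so that the degenerate $\sin\sigma = 0$ branch is shown to contribute only the harmless endpoints $w = \pm 1$, and then confirming that the threshold $c^*$ is sharp. At $c = c^*$ the relation forces $\cos^2\sigma = 0$ and then $\cos\delta = 0$, i.e.\ $\sigma = \delta = \pi/2$, giving $w_1 = -1$, $w_2 = 1$; this matches the direct computation $\varphi_c(1) - \varphi_c(-1) = 2(c^4 - 3c^2 + 1)/(1-c^2)$, which vanishes exactly at $c = c^*$ and changes sign there. Hence for $c$ just above $c^*$ the solution $(\sigma,\delta)$ moves off the endpoints into a genuine symmetric pair in $(0,\pi)$, producing a true self-intersection and destroying univalency — so the range $0 \le c < c^*$ is optimal.
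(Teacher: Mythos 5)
Your collision analysis on the circle is correct, but the argument-principle reduction in your first paragraph contains a genuine gap. The inference ``the count equals $1+n(\Gamma_c,z_0)$ and is nonnegative, hence it is $0$ on the bounded component'' is invalid: $n=-1$ gives count $0$, but $n=+1$ gives count $2$, and nonnegativity cannot tell these apart. Worse, the implication that step relies on --- meromorphic in $\mathbb{D}$, continuous up to the boundary, one simple pole, boundary restriction an immersed Jordan curve, therefore univalent --- is simply false. Take $f(w)=w+\varepsilon/w$ with $0<\varepsilon<1$: it has a single simple pole at $0$, $f'\neq 0$ on $\partial\mathbb{D}$, and $f(e^{i\theta})=(1+\varepsilon)\cos\theta+i(1-\varepsilon)\sin\theta$ traces an ellipse injectively; yet $f(i\sqrt{\varepsilon})=f(-i\sqrt{\varepsilon})=0$, and every point inside the ellipse has exactly two preimages in $\mathbb{D}$ (here the image curve is positively oriented, so $n=+1$ and the count is $2$). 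So you must actually determine the orientation of $\Gamma_c$, i.e.\ prove $n(\Gamma_c,\cdot)=-1$ on the bounded component. Fortunately this is cheap with pieces you already have: from $\Im\varphi_c(e^{i\theta})=-\sin\theta\,\bigl(1+c^2+c^2(1-2c\cos\theta+c^2)^{-1}\bigr)$ --- the same computation your degenerate branch $\sin\sigma=0$ needs --- the curve meets the real axis only at $\varphi_c(\pm 1)$; you already computed $\varphi_c(1)-\varphi_c(-1)=2(c^4-3c^2+1)/(1-c^2)>0$ for $c<c^*$; and the tangent at $\theta=0$ is $i\varphi_c'(1)$ with $\varphi_c'(1)=-(c^2-c+1)^2/(1-c)^2<0$, so $\Gamma_c$ crosses the real axis \emph{downward} at its rightmost intersection. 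A point $z_0$ strictly between $\varphi_c(-1)$ and $\varphi_c(1)$ therefore sees exactly one, downward, crossing of the rightward ray from $z_0$, giving $n(\Gamma_c,z_0)=-1$; since the winding number is $0$ on the unbounded component and constant on components, $z_0$ lies in the bounded component and $n\equiv-1$ there. Your count then gives $0$ preimages on the bounded side and $1$ on the unbounded side (and no interior point can map onto $\Gamma_c$ itself, by openness and the zero count just obtained), which is univalence.

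Everything else checks out: the factorization of $\varphi_c(w_1)-\varphi_c(w_2)$, the symmetric form $1-cs+3c^2p-c^3sp+c^4p^2=0$, the real/imaginary split, and the relation $\cos^2\sigma=-(c^4-3c^2+1)/(4c^2)$, which is negative for $0\leq c<c^*$ precisely because $(c^*)^2=(3-\sqrt{5})/2$ is the smaller root of $t^2-3t+1$. This is also a genuinely different route from the paper's: there, for fixed $c$, one counts intersections of $\Gamma(c)$ with each vertical line $\Re z=a$ by analyzing a quadratic in $\cos\theta$ with discriminant $4c^2(a-a_1)(a-a_2)$, splitting the parameter range into stages and leaning on the convexity result for the first stage. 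Your two-point collision equation treats all $c\in[0,c^*)$ uniformly and yields the sharpness of $c^*$ (first collision at $w=\pm 1$) for free, something the paper only sees implicitly through $\alpha(c^*)=-1$. With the orientation step supplied, your proof is complete and arguably cleaner than the paper's.
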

\begin{proof}
    Note that the vertical line $\Re{w}=x(0)=\frac{-c^3+c^2+2c-1}{c-1}$ intersects $\Gamma(c):=\varphi_c(\partial \mathbb{D})$ at points $\theta$ (besides the trivial case $\theta=0$) such that: $$\cos{(\theta)}= \frac{c^4-2c^3+c^2-2c+1}{2c(c^2-2c+1)}=:\alpha(c).$$ 
    It is easy to check that $\alpha(c)$ is monotonically decreasing for $c\in [0,c^*)$. Moreover, $\alpha(c^*)=-1$ and $\alpha(c_1)=1$. Thus the evolution of the droplet with respect to $c$ can be divided into 3 stages: (I) $0 \leq c < c_1$; (II) $c_1<c<c^*$; (III) $c^*<c<1$.
    Let us denote the vertical lines $\Re{w}=a$ by $v_a$. We remark that univalency of $\varphi$ when $c$ is in stage \textit{I} is established by the argument principle, and
    convexity of the curves for this parameter range.
    
    Thus, we begin with stage II. In stage II, consider the equation
    \begin{align*}
        x(\theta)= \frac{2c(c^2-1)\cos^2\theta + (1+c^2-c^4) \cos\theta -c}{c^2 -2c \cos \theta + 1} = a,
    \end{align*}
    which can be regarded as finding the zeros of the following parabolic function of $\cos{(\theta)}$:
    \begin{align}
        f(\cos(\theta))=(2c^3-2c)\cos^2(\theta)+(-c^4+c^2+2ac+1)\cos(\theta)-c-a(1+c^2).
        \label{parabolic function}
    \end{align}
    A simple calculation gives us the discriminant of $f$:
    \begin{align}
        \Delta=4c^2(a-a_1)(a-a_2),
    \end{align}
    where 
    \begin{align*}
        a_1&=-\frac{c^4+2c^3+c^2-2c-1}{2c},\\
        a_2&=-\frac{c^4-2c^3+c^2+2c-1}{2c}.
    \end{align*}
    It is also easy to show that $a_2>a_1>x(0)$ for $c\in (0,1)$. Since $2c^3-2c$ is strictly negative and $-\frac{c^2-c^4+1+2ac}{4(c^2-1)c}\in (0,1)$, to show that $f(\cos\theta)$ has two zeros in $(0,1)$, it is sufficient to show that $f(1)<0$ and $\Delta>0$. In fact, for $x(0)<a<a_2$, $f(1)<0$, thus $v_a$ intersects $\Gamma(c)$ at 4 points which are symmetric with respect to $x-$axis. Now for $a\in (x(\pi),x(0))$, one can show $v_a$ intersects $\Gamma(c)$ at 2 points, by a similar analysis on $\Delta$. A typical case in stage II is as shown in Figure \ref{fig: shape of droplet evo}, the case of $c=0.5$.
    
    In the stage III, by a similar argument, one can show that for $a<x(0)$, $v_a$ does not intersect $\Gamma(c)$ and for $a\in (x(0),x(\pi))$, $v_a$ intersects $\Gamma(c)$ at 2 points. For $a\in (x(\pi),a_*)$, $\Delta>0$ and $f(\pm 1)<0$ imply that $f$ has 2 solutions hence the intersection points are 4. Here $a^*$ is the value such that $f(-1)$ vanishes. And, for $a\in (a^*,a_1)$, since $\Delta>0$ and $f(\pm 1)<0$, again, we have 4 intersection points. After $a>a_1$, $\Delta<0$, there is no solution, and hence no intersection.
    \begin{figure}[ht]
        \centering
        \includegraphics[width=0.7\textwidth]{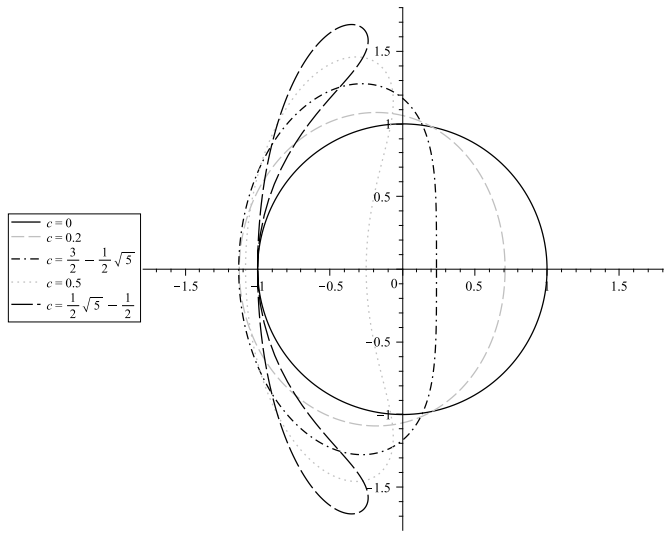}
        \caption{Evolution of the shape of the droplet with respect to $c$.}
        \label{fig: shape of droplet evo}
    \end{figure}
    To summarize the above, we conclude that when $0\leq c < c^*$, the map $\varphi_c(w)$ is univalent.
\end{proof}

\section{A generalization of the droplet in \S2.}
In \S2, we solved the following problem: Suppose $\gamma$ is a rectifiable Jordan curve, with exterior $\Omega$ 
being a Smirnov domain. Furthermore, let $F$ be in class $E^1$ near the boundary of the droplet, with
	\begin{equation} \label{modF2}
		F(z) = \dot{\bar{z}} \qquad\text{a.e. on $\gamma$.}
	\end{equation}
Finally, suppose that $F$ has a single pole at a finite point $c \in \Omega$, and $F(\infty) \neq 0$.
In \S2 (and with the analysis of \S3), we were able to determine a 1-parameter family of mathematical droplets,
which solved equation \eqref{dropleteq} with a particular choice of $F$, $p$, and $\tau$, using the solution to
the equation \eqref{modF2}.

With the purpose of finding more mathematical droplets, we generalize the above setting, by trying to find 
solutions to \eqref{modF2} with the modification that $F$ is now allowed to have multiple poles in the complement
to the droplet. The idea here is the following: allowing additional poles of $F$ in the complement to the droplet will
in turn result in additional parameters in the solution to the problem, i.e., in the conformal map $\varphi$ from the
disc to the complement of the droplet, the Schwarz function $S(z)$, and in the function $F$ itself. 
Additionally, if we want the function
	\begin{equation}
		G(z) := p S(z) + i \tau F(z)
	\end{equation}
to have only a single pole at infinity, similarly to the analysis in \S2, we must have that the residues of the 
poles of $p S(z)$ at the other finite points cancel the residues of $i \tau F(z)$; the additional parameters that
appear in the problem will give us enough freedom to cancel these residues. 

Indeed, it is possible to do this, following the procedure outlined above. The resulting family of droplets is 
depicted in Figure \ref{fig:NewDroplet}. With the philosophy of the above method in mind, we state the following theorem:
    \begin{theorem}
        Let $\varphi_{c} : \mathbb{D} \to \Omega_{c}$ be the family of univalent maps defined by the formula
            \begin{equation}
                \varphi_{c}(w) = -\frac{1}{w} + \frac{4c}{1-cw} - \frac{4c}{1+cw},
             \end{equation}
        where $0 < c < \frac{1}{3}$. Then, the droplet whose complement is $\Omega_{c}$ satisfies the equation
        \begin{equation}
            G_{c}(z) = p \bar{z} + i \tau \dot{\bar{z}}.
        \end{equation}
        on the boundary of the droplet, with $p = 1-c^4$, $\tau = 6c^4 + 2$, 
        and where $G_{c}(z)$ is defined by the formula
        \begin{equation}
             G_c(\varphi(w)) := \hat{G}_c(w) = (9c^4-1)\frac{w(3+c^2w^2)}{1+3c^2w^2}
        \end{equation}
        in the local variable $w = \varphi_{c}^{-1}(z)$. Furthermore, the droplet is convex for 
        $0 < c \leq c_1 = \frac{\sqrt{6\sqrt{13}-21}}{3} = 0.265...$
    \end{theorem}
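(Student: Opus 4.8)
The plan is to reverse the logic of \S2: the map $\varphi_c$ is now handed to us, so I would verify that it solves \eqref{dropleteq} and then treat univalence and convexity separately, as in \S3. Everything rests on two computations. Writing $\varphi_c(w) = -w^{-1} + 8c^2 w/(1-c^2w^2)$ and differentiating gives the factorization
\[
\varphi_c'(w) = \frac{(1+3c^2w^2)^2}{w^2(1-c^2w^2)^2},
\]
whose critical points $w=\pm i/(\sqrt3\,c)$ lie outside $\overline{\mathbb D}$ precisely because $c<\tfrac13$, so $\varphi_c$ is locally univalent. Since $\varphi_c$ has real coefficients, the Schwarz function is $\hat S(w)=\varphi_c(1/w) = -w + 8c^2w/(w^2-c^2)$, and a short computation yields
\[
\hat S'(w) = -\frac{(w^2+3c^2)^2}{(w^2-c^2)^2}.
\]
The decisive point is that $\hat S'(w)$ is the negative of a perfect square; hence $\dot{\bar z}=\sqrt{S'(z)}=\sqrt{\hat S'(w)/\varphi_c'(w)}$ continues to a single-valued rational function, which identifies the solution $F$ of \eqref{modF2} as $\hat F(w) = i\,w(w^2+3c^2)(1-c^2w^2)\big/\big[(w^2-c^2)(1+3c^2w^2)\big]$. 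Equivalently, this is the quadratic differential $Q=F^2\,dz^2 = \hat S'(w)\varphi_c'(w)\,dw^2$ of \S2, now with a perfect-square numerator so that no branch ambiguity arises.

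With $S$ and $F$ in hand I would form $G_c=pS+i\tau F$ and fix $p,\tau$ by the residue-cancellation principle described just before the theorem. Both $\hat S$ and $\hat F$ have poles inside $\mathbb D$ only at the two finite points $w=\pm c$ (the images $z=\pm\varphi_c(c)\in\Omega_c$), and computing the residue at $w=c$ shows the $S$- and $F$-contributions cancel exactly when $p=1-c^4$ and $\tau=6c^4+2=2(3c^4+1)$; the residue at $w=-c$ then vanishes by the oddness $\varphi_c(-w)=-\varphi_c(w)$. Substituting these values and clearing denominators collapses $(1-c^4)\hat S + i(6c^4+2)\hat F$ to the stated $(9c^4-1)\,w(3+c^2w^2)/(1+3c^2w^2)$, which is analytic on $\mathbb D$; the boundary identity $G_c=p\bar z+i\tau\dot{\bar z}$ on $\gamma$ is then immediate from $S=\bar z$ and $F=\dot{\bar z}$ on $|w|=1$. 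I would also record that $G_c'(z)=3(9c^4-1)\big[w(1-c^2w^2)^2/(1+3c^2w^2)^2\big]^2$ is again a perfect square, so $\sqrt{G_c'}$ is single-valued and these droplets are genuinely physical, unlike those of \S2.

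For convexity I would feed $\varphi_c'$, $\hat S'$ and the corresponding second derivatives into Lemma \ref{curvature-lemma}, set $w=e^{i\theta}$, and simplify. The factor $e^{4i\theta}$ cancels between numerator and denominator, and the signed curvature reduces to
\[
\hat\kappa(e^{i\theta}) = \frac{3c^4 s^2 + (6c^2-18c^6)s + (22c^4-9c^8-1)}{\big[(1+9c^4)+3c^2 s\big]^2}, \qquad s:=2\cos 2\theta .
\]
The denominator is positive on $[-2,2]$ (its base equals $(1-3c^2)^2$ at $s=-2$ and $(1+3c^2)^2$ at $s=2$), and the numerator $g(s)$ is an upward parabola whose vertex sits far to the left of $[-2,2]$, so $g$ is monotone there and first changes sign at the endpoint $s=2$, i.e.\ at $\theta=0$. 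Since $g(2) = -(3c^4+14c^2-1)(3c^2+1)(c^2-1)$, the one-signedness persists until $3c^4+14c^2-1=0$, whose root in $(0,\tfrac13)$ is exactly $c_1=\sqrt{6\sqrt{13}-21}/3$; a check at $s=-2$ confirms the other endpoint causes no earlier sign change.

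Univalence on the full range $0<c<\tfrac13$ I would establish exactly as in \S3: using the symmetries $x(-\theta)=x(\theta)$, $y(-\theta)=-y(\theta)$ together with the point symmetry from oddness, it suffices to show $\varphi_c(\partial\mathbb D)$ is a Jordan curve, which one handles by counting intersections with vertical lines and invoking the argument principle; I expect this to be deferred to \S5, just as the analogous claim in \S2 was deferred to \S3. This is where I anticipate the main obstacle: local univalence is free from the factorization of $\varphi_c'$, but the non-self-intersection of the boundary requires the same delicate, stage-by-stage line-intersection bookkeeping as in \S3, here complicated by the extra $\pm$ symmetry of the odd map. The droplet-equation verification and the convexity computation, by contrast, are essentially forced once one observes that both $\hat S'$ and $G_c'$ are perfect squares.
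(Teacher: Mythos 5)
Your proposal is correct; I checked the key computations and they all hold: $\varphi_c'(w)=(1+3c^2w^2)^2/\big(w^2(1-c^2w^2)^2\big)$, $\hat S'(w)=-(w^2+3c^2)^2/(w^2-c^2)^2$, the residue condition at $w=c$ (which reads $2p(1+3c^4)=\tau(1-c^4)$ and is satisfied by the stated $p,\tau$, with $w=-c$ following by oddness), the collapse of $p\hat S+i\tau\hat F$ to $(9c^4-1)w(3+c^2w^2)/(1+3c^2w^2)$, and your curvature formula, which coincides with the paper's after the substitution $s=4\cos^2\theta-2$. However, your route differs from the paper's in a genuine way. The paper's proof is \emph{constructive}: it does not assume the map, but derives it by letting $F$ have two poles, forming the quadratic differential $Q=F^2(\varphi(w))\varphi'(w)^2\,dw^2$, pinning down its form via Appendix A, recovering $\varphi'$ from its boundary modulus using the Smirnov property, and eliminating the parameters $A,B,E,q$ through single-valuedness and residue cancellation (showing along the way that all other parameter choices degenerate to circles or to the \S 2 family). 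Your proof is pure \emph{verification}: given the map, the perfect-square structure of $-\hat S'$ hands you the single-valued rational $F=\sqrt{S'}$ directly, and no quadratic differentials or Smirnov theory are needed at all. Since the theorem as stated supplies $\varphi_c$, this is logically sufficient and considerably leaner; what it loses is the explanation of where the map comes from and the uniqueness-type information the elimination provides. Your convexity argument is the paper's \S 5 argument up to the change of variable $s=2\cos 2\theta$ versus $t=\cos\theta$ (same factorization of $g(2)$, same threshold $c_1$), and your deferral of global univalence to a vertical-line/argument-principle analysis is exactly what the paper does in \S 5. Two small remarks: the critical points $\pm i/(\sqrt 3\,c)$ lie outside $\overline{\mathbb{D}}$ if and only if $c<1/\sqrt 3$, so ``precisely because $c<\tfrac13$'' overstates the equivalence ($c<\tfrac13$ is sufficient, not necessary); and your expression $G_c'(z)=3(9c^4-1)\big[w(1-c^2w^2)^2/(1+3c^2w^2)^2\big]^2$ is in fact the correct one --- the paper's displayed $\sqrt{G_c'(z)}$ is really $\sqrt{\hat G_c'(w)}$, omitting the factor $1/\varphi_c'(w)$, though since both are squares of rational functions the physicality conclusion is unaffected.
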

\begin{figure}[ht]
    \centering
    \includegraphics[width=0.6\textwidth]{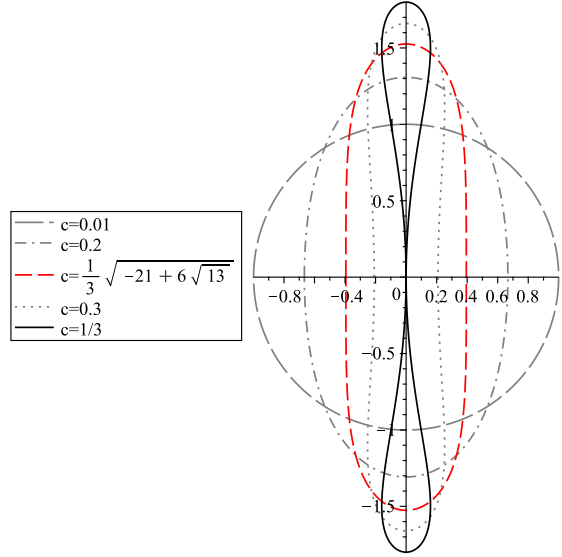}
    \caption{The family of physical droplets constructed in \S4. The droplets are similar in shape to the 1-parameter
    family in \cite{KSV} which contain the McLeod droplet; however, here, all droplets are physical.}
    \label{fig:NewDroplet}
\end{figure}
\begin{proof}
We now begin our computation; the convexity of the family of curves, as well as the univalency of $\varphi$ in the given
parameter range will be assumed here; proof of these facts will be postponed until \S5. 
Suppose $F$ has two poles in the complement of the droplet: without loss of generality, at $0,z_0 \in \Omega$,  and 
suppose $F(\infty) \neq 0$. Let $\varphi$ be the conformal map to the complement of the droplet, normalized so that 
$\varphi(0) = \infty$, $\varphi(c) = 0$, for some $0 < c < 1$. We have no freedom in choosing the location of the point 
which is mapped to $z_0$; we let $|q| < 1$ so that $\varphi(q) = z_0$. This conformal map is illustrated in Figure 
\ref{fig:ModifiedDroplet}. 
\begin{figure}[ht]
    \centering
    \includegraphics[width=.8\textwidth]{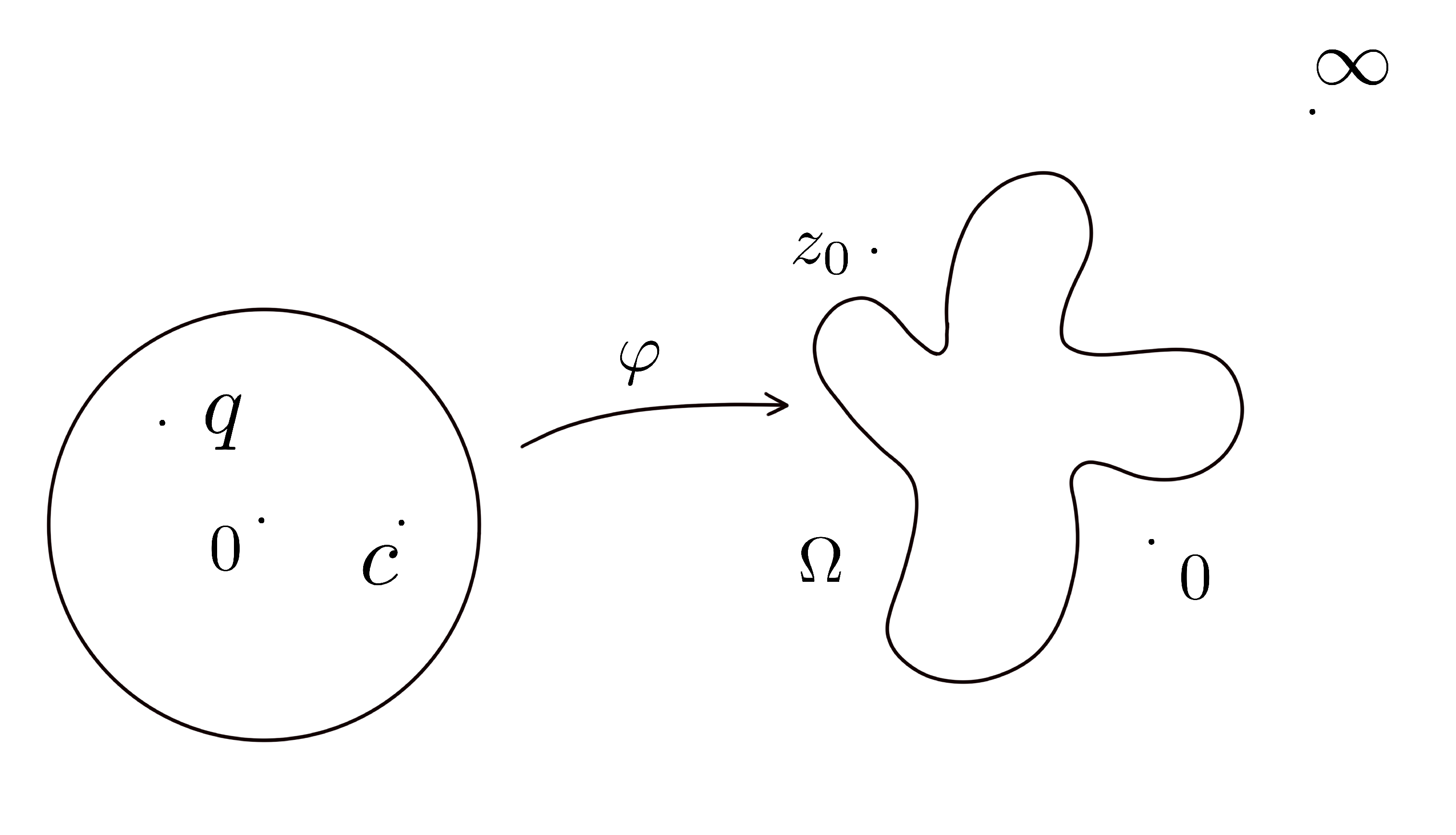}
    \caption{The conformal map $\varphi(w)$ from the disc to the 
    complement of the droplet. $\varphi$ is normalized so that $\varphi(0) = \infty$, $\varphi(c) = 0$, for some 
    $0 < c < 1$. The location of $q: \varphi(q) = z_0$ is fixed by these conditions.}
    \label{fig:ModifiedDroplet}
\end{figure}
Consider the 
quadratic differential 
	\begin{equation} \label{newQD}
		Q := Q(w) dw^2 = F^2(\varphi(w)) (\varphi'(w))^2 dw^2.
	\end{equation}
This quadratic differential has the following properties:
	\begin{enumerate}
		\item $Q$ has poles of order $2$ at $w = c$ and $w = q$.
		\item $Q$ has a pole of order $4$ at $w = 0$.
		\item $Q$ is positive on the unit circle.
	\end{enumerate}
The last property (along with the reflection principle) allow us to extend $Q$ to a quadratic differential on the
entire sphere; thus (again, see Appendix \ref{A}), $Q$ is necessarily of the form
	\begin{equation} \label{form-newQD}
		Q = -C^2\frac{(w-A)^2(w-B)^2(w-E)^2(1-\bar{A}w)^2(1-\bar{B}w)^2(1-\bar{E}w)^2}
		{w^4(w-c)^2(1-cw)^2(w-q)^2(1-\bar{q}w)^2}dw^2,
	\end{equation}
for some $|A|,|B|,|E| < 1$, $C^2 >0$; since $C$ will eventually only contribute a global dilation of the droplet, to simplify matters, we
set $C = 1$. Equations \eqref{modF2}, \eqref{newQD}, and \eqref{form-newQD} imply that 
	\begin{equation}
		|w^2 \varphi'(w)| = \frac{|(1-\bar{A}w)(1-\bar{B}w)(1-\bar{E}w)|^2}{|(1-cw)(1-\bar{q}w)|^2} 
		\qquad \text{a.e. $|w| = 1$}.
	\end{equation}
Since the disc is a Smirnov domain, we can recover $\varphi'(w)$ from its $a.e.$ boundary values. We find that
	\begin{equation} \label{newphi_p}
		\varphi'(w) = \frac{(1-\bar{A}w)^2(1-\bar{B}w)^2(1-\bar{E}w)^2}{(1-cw)^2(1-\bar{q}w)^2w^2},
	\end{equation}
for $|w| < 1$. At this stage, the computation becomes rather involved, due to the number of terms we must
 integrate. We used Maple to compute the integral of $\varphi'(w)$. Upon integration, we obtain terms of the form
 $\log(1-\bar{q}w)$, $\log(1-\bar{c}w)$, $\log(w)$; in order for $\varphi$ to be single valued, the coefficients 
 of these terms must vanish. This puts constraints on the parameters $A, B, E, c,$ and $q$. Using Maple, we 
 eliminate the parameters $A,B,$ and $E$ to obtain an expression for $\varphi := \varphi_{c,q}$. There are several 
 solutions to this system of equations, but all either reduce to a previous case (i.e., the droplet from \S2 or the
 circle), or, are equivalent to the following solution up to a relabelling of $A$, $B$, and $E$:
    \begin{equation*}
        A = e^{\pi i/3}c + e^{-\pi i/3}q, \qquad B = 0, \qquad E = \bar{A};
    \end{equation*}
we additionally determine that $q$ must be real, which we state here, to make the formulas that follow more
manageable. This in fact follows from the condition that the residues of the Schwarz function of the domain and the
residues of $F$ must cancel, and so these functions must have poles at the same locations; this gives us the constraint 
$q = \bar{q}$, i.e., $q$ must be real. 

With these computations in place, we find that the conformal map $\varphi := \varphi_{c,q}$ is:
	\begin{equation} \label{new_phi}
		\varphi_{c,q}(w) = -\frac{1}{w} + \frac{(c-q)^2}{q(1-qw)} + \frac{(c-q)^2}{c(1-cw)}.
	\end{equation}
(Here, as in previous sections, we drop the integration constant; this breaks the condition that $\varphi(c) = 0$, but will ease the exposition). The Schwarz function, in the variable $w = \varphi^{-1}(z)$, is then
	\begin{equation}
		S(\varphi(w)) := \hat{S}(w) = -w + \frac{c^3-c^2q-cq^2 + q^3}{cq} + \frac{(c-q)^2}{w-c} +  \frac{(c-q)^2}{w-q},
	\end{equation}
and, using equations \eqref{newQD}, \eqref{newphi_p},
	\begin{equation}
		F(\varphi(w)) := \hat{F}(w) = \frac{\sqrt{Q}}{\varphi'} = i \frac{w(1-qw)(1-cw)(w-A) 
		(w-\bar{A})}{(q-w)(c-w)(1-\bar{A}w)(1-Aw)}.
	\end{equation}
Now, we must find constants $p$, $\tau$ so that
	\begin{align*}
		\underset{w=c}{\text{Res }} p S(z) &= -\underset{w=c}{\text{Res }} i\tau F(z);\\
		\underset{w=q}{\text{Res }} p S(z) &= -\underset{w=q}{\text{Res }} i\tau F(z).\\
	\end{align*}
These equations give us additional constraints on the parameters involved in the problem; again using Maple, we find 
that $c$ and $q$ must be algebraically related. There are three solutions:
    \begin{enumerate}
        \item $c = q$, in which case it follows from \eqref{new_phi} that the droplet is a circle;
        \item $c = -q$, which results in a $1$-parameter family of droplets, with $p = 1-c^4$, $\tau = 6c^4 + 2$.
        \item $(c,q)$ lie on the ellipse $c^2 + q^2 -cq -1 = 0$, in which case, after some computation, 
        the solution is again a circle.
    \end{enumerate}
Thus, the only case of interest is case 2. In the second case, we can simplify $\varphi(w)$ as:
    \begin{equation}
       \varphi_{c}(w) = -\frac{1}{w} + \frac{4c}{1-cw} - \frac{4c}{1+cw};
    \end{equation}
this map is univalent, provided $0 < c < \frac{1}{3}$, as will be shown in \S5.
We thus see that the droplet satisfies the equation
    \begin{equation}
        G_{c}(z) := p S(z) + i \tau F(z)
    \end{equation}
where $G_{c}(z)$ is given in the variable $w = \varphi^{-1}(z)$ by
    \begin{equation}
         G_{c}(z) = (9c^4-1)\frac{w(3+c^2w^2)}{1+3c^2w^2},
    \end{equation}
and the Schwarz function $S(z)$ and the function $F(z)$ have the form (again in the variable $w = \varphi^{-1}(z)$)
    \begin{equation}
        S(z) = \frac{w(9c^2- w^2)}{w^2-c^2}, \qquad F(z) = i\frac{w(w^2+3c^2)(1-c^2w^2)}{(1+3c^2w^2)(w^2-c^2)}.
    \end{equation}
Finally, we can check if the droplets are physical by seeing if $\sqrt{G_{c}'(w)}$ is single-valued in the disc (in the $w$-plane).
We find that
    \begin{equation}
        \sqrt{G_{c}'(z)} = \sqrt{\frac{(27c^4-3)(1-c^2w^2)^2}{(3c^2w^2 + 1)^2}} = \frac{\sqrt{27c^4-3}(1-c^2w^2)}{(3c^2w^2 + 1)},
    \end{equation}
so that $\sqrt{G_{c}'(z)}$ can be defined in a single-valued way in the disc. Therefore, since $\varphi_c$ is univalent 
for $0 < c < \frac{1}{3}$, all droplets in the family are physical. 
\end{proof}
\begin{remark} 
    The formula \eqref{new_phi} contains the previous family of droplets as a special case. Clearly, when $c = q$, 
    the formula \eqref{new_phi} reduces to $\varphi_{c,q}(w) = \frac{1}{w}$, i.e., the droplet is again a circle.
    Let us shift the conformal map $\varphi_{c,q}(w)$ by the constant $-\frac{c^2}{q} + 2c$; denote this new map by 
    $f_{c,q}(z) := \varphi_{c,q}(w) -\frac{c^2}{q} + 2c$. Expanding the second term of \eqref{new_phi} in powers of 
    $q$, we find that
        \begin{align*}
            f_{c,q}(z)&= -\frac{1}{w} + \left( \frac{c^2}{q}-2c+q + (c^2-2cq+q^2)w + \cdots \right) + \frac{(c-q)^2}{c(1-cw)} 
            -\frac{c^2}{q} + 2c\\
            &= -\frac{1}{w} + \left(c^2 w + \OO(q) \right) + \frac{(c-q)^2}{c(1-cw)},
        \end{align*}
    and, in the limit as $q \to 0$, we recover the map \eqref{conformalmap}, albeit with an overall minus sign (which corresponds to a global rotation of the droplet).
\end{remark}

\begin{remark}
    We summarize the behavior of the family of droplets; cf. Figure\eqref{fig:NewDroplet}:
        \begin{itemize}
            \item As $c \to 0$, the droplets approach the circle.
            \item For $ 0 < c \leq c_1 = \frac{\sqrt{6\sqrt{13}-21}}{3} = 0.265...$, the droplets are convex; precisely at $c = c_1$,
            the droplets cease to be convex.
            \item For $ c_1 < c < c^* = 1/3$, the droplets are no longer convex, and begin to `collapse' along the $x$-axis; when 
            $c = 1/3$, the boundary of the droplet degenerates at $0$. The droplet pinches off into two connected components. The behaviour of the droplet after this critical point would require a more sophisticated analysis.
        \end{itemize}
\end{remark}


The 1-parameter family of physical droplets found above has in fact been discovered before by D. Crowdy \cite{DC} in the context of
a problem in vortex dynamics. The problem considered there shares the same mathematical model as the electrified droplet considered
here, as demonstrated explicitly by D. Crowdy \cite{DC-15}. Thus, solutions to one problem are simultaneously solutions to the other,
albeit with rather distant physical interpretations. The latter problem is in fact the one considered originally by McLeod \cite{M};
the problem we consider is the one considered first by Garabedian \cite{G}. We have therefore demonstrated that our method is 
capable of producing the same solutions as the ones previously discovered by Crowdy.

The solutions found above have 2 poles, and exhibit a 2-fold rotational symmetry. By a clever symmetry argument, 
the above solution can by generalized to an $m$-pole solution with an $m$-fold rotational symmetry. This result
is due originally to R. Wegmann and D. Crowdy \cite{WC}, again in the context of vortex dynamics. The essential idea is to 
replace the conformal map $\varphi(w)$ found above, given by
    \begin{equation}
        \varphi(w) = \frac{1}{w} + \frac{8c^2w}{c^2w^2-1},
    \end{equation}
by the more general ansatz
    \begin{equation}
        \varphi(w):= \varphi_m(w)= \frac{1}{w} + \frac{Aw^{m-1}}{Bw^m+C},
    \end{equation} 
for some to be determined coefficients $A,B,C$. If a solution of this form exists, it will have an $m$-fold rotational symmetry. 
Indeed, working through the computation explicitly, one finds that 1-parameter families of solutions exist for each $m$. As
previously mentioned, these solutions have already been found by Wegmann and Crowdy, and so we only state the result here in the 
terminology of this paper. The resulting droplets have conformal maps given by
    \begin{equation}
        \varphi_m(w) = \frac{1}{w} + \frac{4m(icw)^m}{w(m-1)(1+(m-1)(icw)^m)}.
    \end{equation}
These maps are univalent, provided the parameter $c$ is sufficiently small; by a similar analysis to that which will appear in \S5, 
one finds that $\varphi_2(w)$ is conformal for $0 < c < 1/3 = 0.33333...$, $\varphi_3(w)$ is conformal for 
$0 < c < \left(\frac{\sqrt{2}-1}{4}\right)^{1/3} = 0.46959...$, $\varphi_4(w)$ is conformal for 
$0 < c < \left(\frac{37-8\sqrt{10}}{135}\right)^{1/4} = 0.54259...$, and so forth. It would be interesting 
to find a general formula for the range of univalency of the maps $\varphi_m(w)$, for arbitrary $m$. 
We also list all of the relevant formulae from the previous computations, i.e. the Schwarz function $\hat{S}_m(w)$,
$\hat{F}_m(w)$, $\hat{G}_m(w)$, and $p_m$, $\tau_m$.

In terms of the conformal map $\varphi_m(w)$, we have that, for $|w| = 1$, $\hat{S}_m(w) = \overline{\varphi_m(w)}$. In the complement of
the droplet, this continues analytically to the function
    \begin{equation}
        \hat{S}_m(w) = \frac{w\big[(m-1)w^m+(ic)^m(m+1)^2\big]}{(m-1)\big[w^m+(m-1)(ic)^m\big]}.
    \end{equation}
In terms of the conformal map $\varphi_m(w)$ and the Schwarz function $\hat{S}_m(w)$, we have that 
$\hat{F}_m(w) = \sqrt{\hat{S}'_m(w)/\varphi'_m(w)}$. This function continues analytically to the complement of the droplet as 
    \begin{equation}
    \hat{F}_m(w) = i\frac{w\big[(m-1)(icw)^m+1\big]\big[w^m-(m+1)(ic)^m\big]}{\big[(m+1)(icw)^m-1\big]\big[w^m+(m-1)(ic)^m\big]}.
    \end{equation}
The pressure and surface tension proportionality constants are then determined up to an overall scale factor, so as to guarantee the 
combination $\hat{G}_m(w):= p_m\hat{S}_m(w) + i \tau_m \hat{F}_m(w)$ has no poles in the disc. We find that
    \begin{equation}
        p_m = (m-1)(1 - (m-1)^2c^{2m}), \qquad \tau_m = (2m^2-2)c^{2m}+2.
    \end{equation}
The general formula for $\hat{G}_m(w)$ is, up to an overall numerical constant $A_m$ (depending on $c$,$m$):
    \begin{equation}
        \hat{G}_m(w) = -A_m(c) w\frac{m+1-(m-1)^2(icw)^m }{(m+1)(icw)^m-1}.
    \end{equation}
Differentiating with respect to $w$, we find that $\hat{G}'_m(w)$ is a perfect square (up to a numerical factor):
    \begin{equation}
        \hat{G}_m'(w) = -A_m(c)(m+1)\left(\frac{(m-1)(icw)^m+1}{(m-1)(icw)^m-1}\right)^2.
    \end{equation}
This formula is valid for any value of $c$, and so its square root is single-valued, and the solutions calculated here are indeed physical.

\begin{remark}
    As first noted by D. Crowdy and J. Roenby \cite{CR}, all solutions to the droplet equation discovered so far are examples of so-called \textit{double quadrature domains}; that is, domains which satisfy quadrature identities with respect to both area
    and arc length \footnote{For a more complete definition and general properties of double quadrature domains, cf. \cite{BGS},
    or Chapter 5 in \cite{Shapiro}.}. The authors suspect that generic solutions to the free boundary problem considered here must
    be of this type, but there is (as of the time of the writing of this work) no clear reason why there couldn't be solutions 
    which are not double quadrature domains. It is also not immediately clear what sorts of conclusions can be drawn from this 
    fact; nevertheless, it is certainly an interesting property that all solutions found thus far have in common.
\end{remark}

\begin{remark}
    It has also been pointed out that, very recently, the solutions described in this section also solve another problem in vortex 
    dynamics, that of rotating hollow vortices \cite{DNK-21}; these solutions have been dubbed ``H-states''. In this problem, there is no surface tension. The exact extent to which 
    these two problems are related is still an active area of research; the mathematical model used in \cite{DNK-21} (in particular 
    Formula 2.8) is almost exactly Formula \eqref{dropleteq} in the present work.
\end{remark}

Of course, the above methods constrain us to the case of a connected droplet; droplets with multiple connected components 
require different techniques, as the Riemann mapping theorem no longer applies. This will be discussed briefly in \S6.

\section{Analysis of the droplets in \S4.}
In this section, we analyze the convexity and univalency of the droplets from \S4.

\begin{prop}
    The droplets defined by the conformal map $\varphi_{c}(w) = -\frac{1}{w} + \frac{4c}{1-cw} - \frac{4c}{1+cw}$ are
    convex for $0 < c \leq c_1 := \frac{\sqrt{6\sqrt{13}-21}}{3} = 0.265...$.
    \label{prop: new drop convexity}
\end{prop}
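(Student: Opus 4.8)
The plan is to follow the same strategy used for Proposition~3.1 in \S3: express the signed curvature of the boundary curve $\Gamma(c) = \varphi_c(\partial\mathbb{D})$ in the local variable $w = e^{i\theta}$ via Lemma~\ref{curvature-lemma}, and then show that this curvature does not change sign precisely when $0 < c \leq c_1$. First I would compute the Schwarz function $\hat{S}(w)$ in the local coordinate; this is already furnished in \S4, namely $\hat{S}(w) = w(9c^2 - w^2)/(w^2 - c^2)$. With $\varphi_c(w) = -1/w + 4c/(1-cw) - 4c/(1+cw)$ and $\hat{S}(w)$ both in hand, I would substitute into the curvature formula of Lemma~\ref{curvature-lemma}, namely
\begin{equation*}
    \hat{\kappa}(w) = \frac{i}{2}\sqrt{\frac{\varphi'(w)}{\hat{S}'(w)}}\left(\frac{\hat{S}''(w)}{\hat{S}'(w)\varphi'(w)} - \frac{\varphi''(w)}{\varphi'(w)^2}\right),
\end{equation*}
and simplify. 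As in \S3, I expect the denominator to be a manifest perfect square (hence of constant sign), so the entire question reduces to the sign of a numerator polynomial.

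Next I would exploit the symmetry of the droplet. Because $\varphi_c$ has a two-fold rotational symmetry (it is odd in the sense $\varphi_c(-w) = -\varphi_c(w)$) and the boundary is symmetric across both coordinate axes, the curvature $\hat{\kappa}(e^{i\theta})$ should reduce to a rational expression in $\cos^2\theta$ (equivalently in $\cos 2\theta$). Setting $t = \cos^2\theta \in [0,1]$, the sign of $\hat{\kappa}$ will be governed by a low-degree polynomial $p(t)$ in $t$, with coefficients depending on $c$. The convexity condition becomes the requirement that $p(t)$ have no zero in $[0,1]$. Since the curvature is positive near $c = 0$ (where the droplet is a circle), I would locate the critical $c$ by finding the smallest $c$ for which $p$ acquires a root at an endpoint of $[0,1]$; by analogy with \S3, I expect this to occur at $t = 1$ (i.e.\ $\theta = 0$ or $\pi$, the points on the $x$-axis). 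Solving $p(1) = 0$ for $c$ should yield the stated threshold $c_1 = \sqrt{6\sqrt{13}-21}/3$; indeed, one anticipates an equation like $3c^4 + 14c^2 - 3 = 0$ (or a close relative) whose relevant positive root is $c_1^2 = (7 - 2\sqrt{13})/3$, matching $c_1^2 = (6\sqrt{13}-21)/9$ after rationalization.

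The main obstacle will be the algebraic bookkeeping: computing $\hat{S}''$, $\varphi''$, and the combination inside the curvature formula produces a rational function whose numerator, before simplification, is a high-degree polynomial in $w$, and verifying that it collapses to a tidy polynomial in $t = \cos^2\theta$ requires care (and, as the authors do elsewhere, is most reliably handled with a computer algebra system such as Maple). Once the numerator is in the form $p(\cos^2\theta)$, the remaining analysis is routine: I would confirm that $p$ is monotone (or otherwise has a single relevant sign change) on $[0,1]$, check the sign of $p$ at $t=0$ to fix the global sign of the curvature, and verify that the first endpoint zero occurs at exactly $c = c_1$ with $p$ nonvanishing on $[0,1]$ for all smaller $c$. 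The one subtlety to watch is the branch of the square root $\sqrt{\varphi'/\hat{S}'}$ appearing in Lemma~\ref{curvature-lemma}: since this prefactor is real and nonvanishing on $|w|=1$, it contributes only an overall sign that can be pinned down by evaluating $\hat{\kappa}$ at a single convenient point (e.g.\ $\theta = \pi/2$), which I would do to ensure the curvature is genuinely of one sign rather than merely nonzero.
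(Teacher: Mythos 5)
Your plan follows essentially the same route as the paper's own proof: Lemma \ref{curvature-lemma} with $\hat S(w)=\varphi_c(1/w)$, reduction of the curvature's sign to that of an even polynomial in $\cos\theta$ (the paper's $p(t)$ with $t=\cos\theta$ is indeed a quartic in $\cos^2\theta$), and endpoint checks, with the binding constraint coming from $t=1$ exactly as you predicted; the paper settles the no-roots question by noting $p$ is even with positive leading coefficient and verifying $p(0)<0$ and $p(1)<0$. Your guessed constants are slightly off as computational predictions only --- the relevant factor is $3c^4+14c^2-1$ (not $-3$), whose root gives $c_1^2=(2\sqrt{13}-7)/3$ (you wrote $(7-2\sqrt{13})/3$, a sign slip) --- but this does not affect the structure of the argument.
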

\begin{proof}
    We use the following again use the geometric fact that a plane curve is convex if and only if its signed curvature has the same
    sign at every point.  
    Using Lemma \eqref{curvature-lemma}, our expression for the conformal map $\varphi(w)$, and the fact that $\hat{S}(w) = \varphi(1/w)$, 
    we find that $\kappa(\varphi^{-1}(z)) = \kappa(w)$ is given by
        \begin{equation}
            \hat{\kappa}(w) = \frac{3c^4 w^8 + (6c^2-18c^6)w^6 +(-9c^8+28c^4-1)w^4 + (6c^2-18c^6)w^2 + 3c^4}{(3c^2+w^2)^2(1+3c^2w^2)^2};
        \end{equation}
    or, substituting $w = e^{i\theta}$,
    \begin{equation}\label{curvature-new}
        \hat{\kappa}(e^{i\theta}) = \frac{48c^4 \cos^4\theta + (24c^2-48c^4-72c^6)\cos^2\theta -9c^8+36c^6+34c^4-12c^2-1}{(12c^2\cos^2 \theta + 9c^4-6c^2+1)^2}.
    \end{equation}
    We must show that this expression does not change sign for $0 \leq \theta \leq \pi$ (again, since the curve is
    symmetric, we only need to check the upper half). Clearly, the denominator of \eqref{curvature-new} is always positive; 
    thus, we must only check that the numerator doesn't change sign. Letting $t = \cos\theta$, we must equivalently check that
    the polynomial 
        \begin{equation}
            p(t) := 48c^4 t^4 + (24c^2-48c^4-72c^6)t^2 -9c^8+36c^6+34c^4-12c^2-1
        \end{equation}
    has no roots in $[-1,1]$, so that the curvature is of constant sign, and the curve is convex. To show $p(t)$ has no zeros in $[-1,1]$, it suffices to show that $p(0)< 0$ and $p(1)< 0$ due to the fact that $p(t)$ is even and the leading coefficient is positive. In fact, we have:
    \begin{align*}
        p(0)&=-(3c^2 - 1)(c^2 + 1)(3c^4 - 14c^2 - 1),\\
        p(1)&=-(c - 1)(c + 1)(3c^2 + 1)(3c^4 + 14c^2 - 1).
    \end{align*}
    Solving the inequalities $p(0)< 0, p(1)< 0$, we have
    \begin{align*}
        0 < c < c_1 = \frac{\sqrt{6\sqrt{13}-21}}{3}.
    \end{align*}
    
\end{proof}
\begin{prop}
    For $0<c<1/3$, the conformal map $\varphi_c(w)$ is univalent.
\end{prop}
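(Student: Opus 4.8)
The plan is to reduce univalency, exactly as in the proof of univalency in \S3, to the single statement that the boundary curve $\Gamma(c) := \varphi_c(\partial\mathbb{D})$ is a Jordan curve. Once that is known, the argument principle (equivalently, the standard boundary-correspondence theorem for exterior conformal maps) finishes the job: $\varphi_c$ is meromorphic on $\mathbb{D}$ with a single simple pole at $w=0$ — the other poles $w=\pm 1/c$ lie outside $\overline{\mathbb{D}}$ since $0<c<1/3<1$ — and extends continuously to $\partial\mathbb{D}$, so if $\theta\mapsto\varphi_c(e^{i\theta})$ is injective then for each $z_0$ in the unbounded complementary component of $\Gamma(c)$ the count ``$\#\text{zeros}-\#\text{poles}=n(\Gamma(c),z_0)$'' gives exactly one preimage, whence $\varphi_c$ maps $\mathbb{D}$ bijectively onto $\Omega_c$. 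The advantage over \S3 is that this family carries an extra symmetry: from $\varphi_c(-w)=-\varphi_c(w)$ together with $\varphi_c(\bar w)=\overline{\varphi_c(w)}$, the curve $\Gamma(c)$ is invariant under reflection in both coordinate axes, and I expect a single argument uniform in $c\in(0,1/3)$ rather than the case split of \S3.

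Writing $\varphi_c(e^{i\theta})=x(\theta)+iy(\theta)$, the first step is to record $y(\theta)=\sin\theta\,\bigl(1+8c^2(1+c^2)/D(\theta)\bigr)$ with $D(\theta)=(1+c^2)^2-4c^2\cos^2\theta\geq(1-c^2)^2>0$, so that $y$ has the sign of $\sin\theta$; in particular $\Gamma(c)$ meets the real axis only at $\theta=0,\pi$, at the points $x(0)=(9c^2-1)/(1-c^2)<0$ and $x(\pi)=-x(0)>0$ (which collide at the origin precisely when $c=1/3$, accounting for the pinch-off). Next, $x(\theta)$ is an odd function of $\cos\theta$, so $\theta\mapsto\pi-\theta$ sends $(x,y)\mapsto(-x,y)$; a direct sign check shows moreover that $x(\theta)<0$ strictly on $[0,\tfrac{\pi}{2})$. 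Combining these facts confines each of the four arcs $\theta\in(0,\tfrac{\pi}{2}),(\tfrac{\pi}{2},\pi),\dots$ to a distinct open quadrant, the arcs meeting only at the four semi-axis points. Consequently $\Gamma(c)$ is Jordan if and only if the single quarter-arc $\theta\in[0,\tfrac{\pi}{2}]$ is a simple arc.

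The heart of the matter is to prove that quarter-arc simple, for which it suffices to show $y(\theta)$ is strictly monotonic on $[0,\tfrac{\pi}{2}]$, since strict monotonicity of the second coordinate already forces injectivity there (note $y$ is \emph{not} monotonic on all of $[0,\pi]$, which is why the reduction to a quarter-arc is essential). Setting $s=\cos^2\theta\in[0,1]$ and $y^2=Y(s)=(1-s)\bigl((a-4c^2s)/(b-4c^2s)\bigr)^2$ with $a=(1+c^2)(1+9c^2)$, $b=(1+c^2)^2$, one computes that $Y'(s)$ equals $(a-4c^2s)(b-4c^2s)^{-3}$ times a concave quadratic $B(s)=-16c^4s^2+8c^2(1+c^2)(1-3c^2)s+B(0)$, the prefactor being positive on $[0,1]$. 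Its global maximum, attained at the vertex, factors cleanly as
\[
    B(s^*)=16c^2(3c^2-1)(c^2+1),
\]
which is negative precisely for $c<1/3$; hence $B<0$ everywhere, $Y$ is strictly decreasing, and $y(\theta)$ is strictly increasing on $[0,\tfrac{\pi}{2}]$.

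I expect the vertex computation to be the main obstacle. Every other step is elementary, but the monotonicity of $y$ genuinely requires controlling a quadratic whose maximum can sit inside $[0,1]$ for $c$ near $1/3$, so that endpoint checks do not suffice and only the exact value of $B(s^*)$ settles the sign — this is also the step that pinpoints $1/3$ as the sharp threshold. With the quarter-arc shown simple, reflecting across the two axes assembles $\Gamma(c)$ into a Jordan curve, and the argument-principle conclusion of the first paragraph yields univalency of $\varphi_c$ for all $0<c<1/3$.
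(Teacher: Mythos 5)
Your proof is correct on the stated range $0<c<1/3$, and it takes a genuinely different route from the paper's. The paper argues in two stages: for $0<c<c_1$ univalency is inherited from convexity (Proposition 5.1), and for fixed $c\in(c_1,1/3)$ it counts intersections of $\Gamma(c)$ with vertical lines $\Re z=a$, reducing to locating roots in $(-1,1)$ of a cubic in $t=\cos\theta$ via endpoint sign checks and a discriminant computation (which also yields the half-width $a^*$). You instead give one argument uniform in $c$: you exploit both symmetries of this family ($\varphi_c(-w)=-\varphi_c(w)$ and $\varphi_c(\bar w)=\overline{\varphi_c(w)}$ --- the paper's proof never uses the odd symmetry), confine the four quarter-arcs to the four closed quadrants, and prove each quarter-arc simple by monotonicity of $y$; the argument principle finishes as in the paper. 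I checked the computations: $y(\theta)=\sin\theta\,\bigl(1+8c^2(1+c^2)/D(\theta)\bigr)$, $x(0)=(9c^2-1)/(1-c^2)$, the factorization $Y'(s)=(a-4c^2s)(b-4c^2s)^{-3}B(s)$ with $B(s)=-16c^4s^2+8c^2(1+c^2)(1-3c^2)s+B(0)$, and the vertex value $B(s^*)=16c^2(3c^2-1)(c^2+1)$ are all correct, as is the sign claim $x(\theta)<0$ on $[0,\pi/2)$, which requires exactly $8c^2(1-c^2)<(1-c^2)^2$, i.e. $c<1/3$. Your route buys uniformity in $c$ and avoids the case split; the paper's route buys the explicit width $2a^*$ and parallels the stage analysis of \S3.

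Two of your side claims are false, though neither breaks the proof. First, $B(s^*)=16c^2(3c^2-1)(c^2+1)$ is negative precisely for $c<1/\sqrt{3}\approx 0.577$, not for $c<1/3$; the implication you actually use ($c<1/3\Rightarrow B(s^*)<0$) is true, so the monotonicity step stands, but it is emphatically \emph{not} the step that pinpoints the threshold. The sharp constraint comes from the quadrant confinement: $x<0$ on $[0,\pi/2)$ forces $9c^2<1$ (equivalently $x(0)<0$), and for $c\in(1/3,1/\sqrt{3})$ the quarter-arc is still simple but escapes its quadrant, producing self-intersections at a pair of points on the imaginary axis (the pinch-off). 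Second, the vertex never sits inside $[0,1]$ in the relevant range: one computes $s^*=\frac{(1+c^2)(1-3c^2)}{4c^2}\geq\frac{5}{3}>1$ for $0<c\leq 1/3$, so the concave quadratic $B$ is increasing on $[0,1]$ and the single endpoint check $B(1)=-(1+3c^2)^2(1-c^2)^2<0$ already settles the sign, contrary to your remark that endpoint checks cannot suffice. You should fix this narrative, since as written it misattributes where $1/3$ enters your own argument.
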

\begin{proof}
   From Proposition \eqref{prop: new drop convexity}, we know that $\varphi_c(w)$ is univalent for $0 < c < c_1$.
   Next, fix $c\in (c_1,1/3)$, denote $x(\theta)=\Re{\varphi_c(w)}$. Consider equation 
   \begin{align}
    \label{new univalent}
       x(\theta) = a, \qquad a\in\RR.
   \end{align}
   We need to show that for $a\in (0,x(\pi))$, the equation has only two solutions, while for $a\in (x(\pi),a^*)$, the equation has four solutions, where $a^*$ is to be determined later. By taking the numerator of equation \eqref{new univalent}, we introduce a new function
   \begin{align}
       f(t)=-4c^2t^3 - 4ac^2t^2 + (9c^4 - 6c^2 + 1)t + ac^4 + 2ac^2 + a,
   \end{align}
   where $t=\cos(\theta)\in (-1,1)$. Now it is sufficient to show that for $c\in (c_1,1/3)$ fixed, $f$ has two solutions for $a\in (x(\pi),a^*)$, where $$x(\pi)=\frac{9c^2-1}{c^2-1}.$$
   Note the facts that
   \begin{align}
       f(0)&=a(c^2+1)^2>0,\\
       f(\pm 1)&=(c^2-1)(a(c^2-1)\pm (9c^2-1))<0,
   \end{align}
   provided that $a>\frac{9c^2-1}{c^2-1}$. Also, since the leading coefficient of $f$ is negative, we conclude that $f$ has exactly two solutions in $(-1,1)$ with different signs. Similarly, for $ a\in (0,x(\pi))$, one can show $\pm f(\mp 1)>0$, which implies that $f(t)$ has exactly one solution in $(0,1)$. Now, let us determine $a^*$. This comes from the discriminant of cubic equation: using Maple, it is easy to get that
   \begin{align}
       a^*=\frac{-4c + \sqrt{-27c^8 + 18c^4 + 8c^2 + 1}}{2(c^2 + 1)c}.
   \end{align}
\end{proof}
\begin{remark}
    $2a^*$ is actually the maximum width of the droplets.
\end{remark}

 \section{Multi-component droplets.}
 We now discuss the construction of multi-component droplets, i.e. droplets whose complement is a domain of connectivity $g \geq 1$. 
 Our main focus will be the two component case, and most of this
 section is dedicated to the analysis of this particular case. At the present time, the authors were unable to derive any exact solutions in this case; however, it is possible to write down an expression for the form of the conformal map for a particular solution, and so this is presented. At the end of this section, we will also discuss our speculations on how
 to explicitly construct solutions with more than two components.
 Throughout the following, let $r < 1$, and denote $\mathbb{A} =\{z \mid r < |z| < 1\}$ to be the annulus, and $\Gamma = \partial \mathbb{A}$
 its boundary. Our first objective is to establish results about quadratic differentials on the annulus, similar to those obtained for
 the disc in Appendix \ref{A}. We achieve this through the following lemma and proposition. Both are well-known results, and we present them mainly for the convenience of the reader.
 \begin{lemma}
 Suppose $f$ is meromorphic in $\mathbb{A}$ and continuous on $\bar{\mathbb{A}}$, with no poles on the boundary,  non-vanishing in $\Omega$, and $f|_{\Gamma} > 0$. Then, $f$ is a constant.
 \end{lemma}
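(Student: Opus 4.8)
The plan is to reduce the statement to the elementary fact that a real-valued holomorphic function is constant; the only genuinely new ingredient, compared with the disc treated in Appendix \ref{A}, is the topology of the annulus. I would proceed in four short steps.

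First, I would rule out interior poles. Since $f$ is non-vanishing in $\mathbb{A}$, its zero count is $N=0$; by the argument principle, applied with both boundary circles positively oriented,
\[
N-P=\frac{1}{2\pi i}\left(\oint_{|z|=1}-\oint_{|z|=r}\right)\frac{f'(z)}{f(z)}\,dz ,
\]
and each integral is the winding number about the origin of the image of the corresponding circle under $f$. Because $f|_{\Gamma}>0$, each such image lies on the positive real axis, so both winding numbers vanish and $N-P=0$. Hence $P=0$, and $f$ is holomorphic and non-vanishing on $\mathbb{A}$.

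Second --- and this is the crux --- I would show that $\log f$ is single-valued on $\mathbb{A}$. The obstruction is the period of the closed form $\frac{f'}{f}\,dz$ around the generator of $\pi_1(\mathbb{A})\cong\mathbb{Z}$. Since $f$ is non-vanishing on $\overline{\mathbb{A}}$, any interior circle $|z|=\rho$ with $r<\rho<1$ is freely homotopic, through the region where $f\neq 0$, to the outer boundary $|z|=1$; the winding number of $f$ is therefore homotopy-invariant along this deformation and equals its value on $|z|=1$, which is $0$. Thus the period vanishes, $\frac{f'}{f}\,dz$ is exact, and it admits a single-valued primitive, which we denote $\log f$.

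Third, I would invoke the maximum principle for harmonic functions. Writing $\log f=\log|f|+i\arg f$, the function $\arg f=\operatorname{Im}\log f$ is harmonic in $\mathbb{A}$ and extends continuously to $\Gamma$, where positivity of $f$ forces $\arg f\equiv 0$. A harmonic function on $\mathbb{A}$ that vanishes on the entire boundary $\partial\mathbb{A}$ is identically zero, so $\arg f\equiv 0$ throughout $\mathbb{A}$. Consequently $\log f$ is real-valued and holomorphic, hence constant by the Cauchy--Riemann equations, and therefore $f$ is constant. The main obstacle is the second step: because the annulus is not simply connected, single-valuedness of $\log f$ is not automatic and must be extracted from the hypothesis that $f$ is positive on \emph{both} boundary circles; everything else is routine.
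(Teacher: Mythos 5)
The paper gives no proof of this lemma to compare against (it is dismissed as ``elementary complex analysis''), so your proposal must stand on its own. Steps 1 and 2 do stand: the winding-number form of the argument principle disposes of interior poles, and the free-homotopy argument correctly kills the period of $f'/f\,dz$, so a single-valued branch $\log f$ exists (and extends continuously to $\bar{\mathbb{A}}$, since $f$ is continuous and non-vanishing there). The gap is in step 3, in the sentence ``positivity of $f$ forces $\arg f \equiv 0$'' on $\Gamma$. Positivity forces only $\arg f \in 2\pi\mathbb{Z}$ pointwise; by continuity $\arg f$ is then constant on each component of $\Gamma$, but $\Gamma$ has \emph{two} components. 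After normalizing the branch so that $\arg f \equiv 0$ on $|z|=1$, all you can conclude on $|z|=r$ is that $\arg f \equiv 2\pi k$ for some integer $k$; nothing in positivity alone rules out $k \neq 0$. If $k \neq 0$, then uniqueness for the Dirichlet problem gives $\arg f = 2\pi k\,\log|z|/\log r$, which does not vanish identically, and your final step fails. This is exactly the annular subtlety you yourself called ``the crux'' in step 2, reappearing in the boundary data: the dangerous harmonic function is $\log|z|$, which is zero on one boundary circle and a nonzero constant on the other.

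The gap is repairable by one more period argument of the same kind you already used: if $\operatorname{Im}\log f = c\log|z|$ with $c = 2\pi k/\log r$, then $(\log f)'(z) = ci/z$, whose integral over $|z|=\rho$ is $-2\pi c$; single-valuedness of $\log f$ forces this to vanish, hence $k=0$, and your maximum-principle step then closes the proof. You should also be aware of a much shorter route, presumably the ``elementary'' one the authors intend: since $f$ is non-vanishing on $\bar{\mathbb{A}}$, the function $g = 1/f$ is holomorphic in $\mathbb{A}$ (poles of $f$ become zeros of $g$, and $f$ has no zeros to become poles of $g$), continuous on $\bar{\mathbb{A}}$, and real on $\Gamma$. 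Hence $\operatorname{Im} g$ is harmonic in $\mathbb{A}$, continuous up to the boundary, and vanishes on all of $\Gamma$, so $\operatorname{Im} g \equiv 0$ by the maximum principle; a holomorphic function with identically vanishing imaginary part is constant, so $g$, and with it $f$, is constant. This uses only realness (not positivity) of the boundary values, requires no logarithms or winding numbers, and the two-boundary-component issue never arises.
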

 The proof of this lemma follows from elementary complex analysis, and so we omit it here.
 \begin{prop}
     Let $A, B \in \mathbb{A}$, such that $A/B \in \RR$. Then, there exists a unique (up to a constant multiple) 
     function $f_{AB}(z)$, meromorphic in $\mathbb{A}$, and positive on the boundary, such that $f_{AB}$ has a zero at $A$ and 
     a pole at $B$.
     \end{prop}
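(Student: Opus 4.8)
The plan is to construct $f_{AB}$ explicitly using a suitable product of elliptic-type building blocks adapted to the annulus, and then establish uniqueness via the preceding lemma. The key structural idea is that the annulus $\mathbb{A}$ is a doubly connected domain whose double (obtained by reflecting across both boundary circles) is a torus; prescribing a single zero at $A$ and a single pole at $B$, together with the requirement of positivity on $\Gamma$, is precisely the kind of divisor condition that the theory of theta functions on a torus handles cleanly. Concretely, I would introduce the classical Schottky--Klein prime function $\omega(z,\zeta)$ for the annulus (or equivalently work with a quotient of Jacobi theta functions in the variable $\zeta = \log z / \log r$), which furnishes a holomorphic function with a prescribed simple zero and the correct automorphy under $z \mapsto r^2 z$. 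The candidate is then built as a ratio $f_{AB}(z) = K\,\dfrac{\omega(z,A)}{\omega(z,B)}\,z^{\lambda}$, where the exponent $\lambda$ and constant $K$ are tuned so that (i) $f_{AB}$ is single-valued on $\mathbb{A}$, and (ii) $f_{AB}|_{\Gamma}>0$.

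The main work is arranging positivity on \emph{both} boundary components simultaneously. First I would record how $|f_{AB}|$ behaves on the two circles $|z|=1$ and $|z|=r$, using the reflection symmetry $z \mapsto 1/\bar z$ (for the outer circle) and $z \mapsto r^2/\bar z$ (for the inner circle) together with the reality hypothesis $A/B \in \RR$. The hypothesis $A/B \in \RR$ is exactly what is needed so that the phase of $f_{AB}$ can be made constant along each boundary circle: it forces the arguments of the zero and pole factors to combine into a function whose boundary values are real. I would show that on $|z|=1$ the function $z\,\overline{f_{AB}(z)}$ equals $f_{AB}(z)$ up to a real constant, pinning the argument, and similarly on $|z|=r$; matching these two constraints determines $\lambda$ uniquely and shows the resulting $f_{AB}$ can indeed be normalized to be real and positive on all of $\Gamma$.

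For uniqueness, suppose $f_{AB}$ and $g_{AB}$ both satisfy the hypotheses. Their quotient $h = f_{AB}/g_{AB}$ is meromorphic in $\mathbb{A}$: the simple zeros at $A$ cancel, as do the simple poles at $B$, so $h$ is holomorphic and non-vanishing in $\mathbb{A}$, and continuous on $\bar{\mathbb{A}}$ with no boundary poles. Since both functions are positive on $\Gamma$, the quotient $h$ is also real and positive on $\Gamma$. The hypotheses of the lemma are now met, so $h$ is constant, which is exactly the claimed uniqueness up to a constant multiple.

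The hard part will be the positivity bookkeeping on the two boundary circles. Getting a single zero and single pole to cooperate with positivity on a doubly connected domain is genuinely delicate, because the naive prime-function ratio is not single-valued on $\mathbb{A}$ (it picks up a multiplicative factor under the deck transformation $z \mapsto r^2 z$), so the monomial correction $z^\lambda$ must be chosen to cancel that factor \emph{and} respect both reality conditions at once. Verifying that a single real parameter $\lambda$ suffices to meet both constraints is where the hypothesis $A/B \in \RR$ does its essential work, and confirming the compatibility of the two boundary conditions is the crux of the existence half of the proof.
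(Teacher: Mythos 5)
Your uniqueness argument is exactly the paper's (quotient of two candidates, then the preceding lemma), and is fine. The existence half, however, has a genuine gap: the ansatz $f_{AB}(z)=K\,z^{\lambda}\,\omega(z,A)/\omega(z,B)$ cannot be made real, let alone positive, on the boundary, no matter how $\lambda$ and $K$ are tuned. For the annulus the prime function is (up to normalization) $\omega(z,\zeta)\propto P(z/\zeta)$ with $P(z)=(1-z)\prod_{k\geq 1}(1-r^{2k}z)(1-r^{2k}/z)$, so your candidate is $K z^{\lambda}P(z/A)/P(z/B)$. First, single-valuedness on $\mathbb{A}$ forces $\lambda\in\ZZ$ (the ratio $P(z/A)/P(z/B)$ is already single-valued on $\mathbb{A}$; the factor picked up under $z\mapsto r^2z$ obstructs descent to the double, not single-valuedness, so the monomial cannot be used as a free real parameter). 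Second, and decisively: reality of a meromorphic $f$ on $|z|=1$ is equivalent, by the identity theorem, to the identity $f(z)=\overline{f(1/\bar z)}$ on all of $\CC\setminus\{0\}$. The left side has zero set $\{Ar^{2k}:k\in\ZZ\}$, the right side has zero set $\{r^{2k}/\bar A:k\in\ZZ\}$; these sets coincide only if $|A|^2$ is an integer power of $r^2$, which is impossible for $r<|A|<1$. Since multiplying by $Kz^{n}$ changes neither zero set, no choice of constant and integer exponent rescues the ansatz. This is not "delicate bookkeeping'' that remains to be done; the proposed family of candidates simply does not contain a solution.

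The missing idea is to pair each point with its reflections rather than correct by a monomial: the paper takes
\begin{equation*}
f_{AB}(z)=\frac{P(z/A)\,P(\bar A z)}{P(z/B)\,P(\bar B z)},
\end{equation*}
where the extra factors $P(\bar A z)$, $P(\bar B z)$ place zeros and poles at the reflected orbits $r^{2k}/\bar A$, $r^{2k}/\bar B$, all of which lie outside $\bar{\mathbb{A}}$, so the divisor inside $\mathbb{A}$ is still just a zero at $A$ and a pole at $B$. The identities $P(r^2z)=-z^{-1}P(z)=P(1/z)$ then make the boundary values collapse to perfect squares: on $|z|=r$ one gets $|P(z/A)|^2/|P(z/B)|^2>0$ outright, while on $|z|=1$ one gets $(\bar A/\bar B)\,|P(z/A)|^2/|P(z/B)|^2$. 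Note where the hypothesis $A/B\in\RR$ actually enters: not in fixing an exponent or matching phases along the two circles, but only in making the constant $\bar A/\bar B$ real (and, taken positive, making the outer boundary values positive). With that correction your overall plan --- explicit construction via prime functions plus the lemma for uniqueness --- becomes the paper's proof.
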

 \begin{proof}
     Uniqueness of $f_{AB}(z)$ follows immediately from the previous lemma. To show that such an $f$ exists, we construct it 
     explicitly. We claim that, up to a constant multiple, 
         \begin{equation}
             f_{AB}(z) = \frac{P(z/A) P(\bar{A}z)}{P(z/B) P(\bar{B}z)},
         \end{equation}
     where
     \begin{equation}
         P(z) = (1-z)\prod_{k=1}^{\infty}(1-r^{2k}z)(1-r^{2k}/z).
     \end{equation}
     The form of this solution can be inferred from the Schwarz reflection principle: since $f_{AB}$ has a zero (resp. pole) at $A$
     (resp. B), and is positive on $\Gamma$, by the reflection principle, $f$ must have zeros (resp. poles) at $Ar^{2n}$ and 
     $r^{2n}/A$ (and the corresponding places for $B$), $n \geq 1$. This function obviously satisfies these conditions, and its only 
     zero/pole in $\mathbb{A}$ is $A$ (resp. $B$). Therefore, it remains only to check that $f|_{\Gamma} > 0$. It is easily verified that
     the function $P(z)$ admits the following properties:
     \begin{equation}
         P(r^2z)  = -\frac{1}{z}P(z) = P(1/z).
     \end{equation}
     Using this identity, as well as the fact that, for $|z| = r$, $\bar{z} = r^2/z$, we see that
     \begin{equation}
         f_{AB}(z) = \frac{P(z/A) P(r^2/\bar{A}z)}{P(z/B) P(r^2/\bar{B}z)} =
          \frac{P(z/A) P(\bar{z}/\bar{A})}{P(z/B) P(\bar{z}/\bar{B})} = \frac{|P(z/A)|^2}{|P(z/B)|^2} > 0,
     \end{equation}
     i.e., $f_{AB} > 0$ on the circle $|z| = r$. 
     Now, on the circle $|z| = 1$, since $\bar{z} = 1/z$, we have that
     \begin{equation}
         f_{AB}(z) = \frac{-\bar{A}z P(1/\bar{A}z) P(z/A)}{-\bar{B}z P(1/\bar{B}z) P(z/B)}
             = \frac{\bar{A}}{\bar{B}} \frac{P(\bar{z}/\bar{A}) P(z/A)}{P(\bar{z}/\bar{B}) P(z/B)} = 
             \frac{\bar{A}}{\bar{B}} \frac{|P(z/A)|^2}{|P(z/B)|^2} > 0,
     \end{equation}
     since by assumption, $\bar{A}/\bar{B}$ is real (and, without loss of generality, is positive). This completes the proof.
 \end{proof}
 \begin{remark}
     The function $f_{AB}$ written above is an explicit construction of the map from the annulus to a parallel slit 
     domain. It is a theorem of Hilbert that a unique (up to an appropriate normalization) conformal map from any multiply 
     connected domain to a parallel slit domain exists. In the case of the annulus, from Hilbert's theorem, we can
     construct a conformal mapping $f$ of the annulus onto the plane with two horizontal slits; by an overall 
     translation, we can arrange for one of the slits to lie on the positive real axis. If $\infty$ is the image of a 
     point on the real axis, then $\overline{f(\bar{z})} = f(z)$; hence, the second slit must also lie on the real axis.
     More generally, there is a direct formula in terms of the Green's function and Schwarz 
     kernel of the domain of a mapping of any $n$-connected domain onto an annulus with $n-2$ concentric slits. Such a 
     formula would be a useful tool in considerations of cases with more connected components. See \cite{K2,K3},  for further
     information.
 \end{remark}
 
 \begin{remark}
     The function $f_{AB}(w)$ is a version of a function known as the \textit{Schottky-Klein prime function} for the torus. Such 
     functions admit explicit representations as ratios of theta functions, via the relation
    \begin{equation} \label{theta-identity}
       P(w) = -\frac{ie^{-\tau/2}}{Cr^{1/4}} \Theta_{1}(i\tau/2,r),
     \end{equation}
     where $\tau = -\log w$, and $\Theta_1$ is the first Jacobi theta function (and constant $C$, although for our purposes, this 
     constant is irrelevant). For more general properties of such functions, we refer the reader to \cite{DC2}, or the excellent
     monograph written recently by D. Crowdy \cite{DC-20}.
 \end{remark}

 Finally, we can conclude that
 \begin{prop} \label{torus-prop}
     Suppose $Q$ is a quadratic differential on the annulus $\mathbb{A}$, with first order poles at
     $B_i \in \mathbb{A}$, $i = 1,...,n$. Then, $Q$ is
     necessarily of the form 
     \begin{equation}
         Q = -C^2\prod_{i = 1}^n f_{A_i B_i} (z) \frac{dz^2}{z^2},
     \end{equation}
     For some positive constant $C^2>0$, and $A_i \in \mathbb{A}$ such that $\prod_i (A_i/B_i) > 0$.
 \end{prop}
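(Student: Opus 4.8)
The plan is to reduce the statement to a zero--pole count for elliptic functions, exactly parallel to the disc computation of Appendix \ref{A}, by passing to the Schwarz double of the annulus. The operative hypothesis, inherited from the disc case where positivity of $Q$ on the circle is precisely what permits reflection, is that $Q$ is positive on $\Gamma$; I take this as part of the assumptions (the conclusion's conditions $C^2>0$ and $\prod_i(A_i/B_i)>0$ are otherwise unmotivated). Writing $Q = q(z)\,dz^2$ with $q$ meromorphic in $\mathbb{A}$, holomorphic near $\Gamma$, and with simple poles only at the $B_i$, the first step is to reflect. Positivity of $Q$ on $|z|=1$ and on $|z|=r$ lets me continue $q$ by Schwarz reflection across each circle; composing the two reflections $z\mapsto 1/\bar z$ and $z\mapsto r^2/\bar z$ produces the dilation $z\mapsto r^2 z$, and a short computation shows the extended $q$ obeys the transformation law $q(r^2 z) = r^{-4} q(z)$. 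Equivalently, since $dz/z$ (hence $dz^2/z^2$) is invariant under $z\mapsto r^2z$, the function $h(z) := z^2 q(z)$ descends to a meromorphic function on the torus $T := \mathbb{C}^\ast/\langle z\mapsto r^2 z\rangle$, i.e. an elliptic function satisfying the reflection symmetry $h(z)=\overline{h(1/\bar z)}$ and, as sign bookkeeping on $\Gamma$ shows, real and negative on $\Gamma$, which is the fixed locus of the anti-holomorphic involution $\sigma:z\mapsto 1/\bar z$ induced on $T$.

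The second step is to count. Because $h$ has simple poles exactly at $B_1,\dots,B_n$ in $\mathbb{A}$ and is $\sigma$-symmetric, it also has simple poles at $1/\bar B_1,\dots,1/\bar B_n$ in the reflected half; thus $h$ has degree $2n$ on $T$. Since an elliptic function has as many zeros as poles, $h$ has $2n$ zeros, and $\sigma$-symmetry together with $h\neq 0$ on $\Gamma$ forces them to split into $n$ zeros $A_1,\dots,A_n\in\mathbb{A}$ and their reflections $1/\bar A_i$. This identifies the $A_i$ of the statement as the zeros of $Q$ inside the droplet complement.

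The third step is to build the candidate and invoke uniqueness. Using the prime function $P$ of the preceding proposition I set $h_0(z) := \prod_{i=1}^n f_{A_i B_i}(z)$; the location of the zeros of $P$ shows that each $f_{A_iB_i}$, defined by the same formula for any $A_i,B_i\in\mathbb{A}$, has zeros at $A_i,\,1/\bar A_i$ and poles at $B_i,\,1/\bar B_i$, so $h_0$ has exactly the same divisor on $T$ as $h$. Hence $h/h_0$ is an elliptic function with neither zeros nor poles, and by Liouville's theorem on $T$ it equals a constant $\lambda$. Evaluating on $|z|=r$, where the computation in the preceding proposition gives $f_{A_iB_i}>0$ for every $i$, shows $h_0>0$ there, and since $h<0$ there we get that $\lambda$ is a negative real number. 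Writing $\lambda = -C^2$ with $C^2>0$ yields $Q = -C^2\prod_i f_{A_iB_i}(z)\,dz^2/z^2$, and the residual requirement that $Q$ be positive on $|z|=1$ then forces $\prod_i (A_i/B_i)>0$, as claimed.

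The main obstacle I expect is making the first step fully rigorous: verifying that positivity on both boundary circles yields a single-valued extension to $\mathbb{C}^\ast$ with the correct weight, keeping the factor $r^{-4}$ and the anti-holomorphic bookkeeping of the reflection on a quadratic differential straight, and cleanly recognizing $h=z^2q$ as an honest elliptic function rather than merely a section. Once $h$ lives on $T$, the counting and the Liouville comparison are routine. The only other delicate point is the reality/positivity of $\prod_i(A_i/B_i)$, which I would extract \emph{a posteriori} from positivity of $Q$ on $|z|=1$, rather than attempt to impose the factor-by-factor reality $A_i/B_i\in\mathbb{R}$ of the preceding proposition (which need not hold for an individual pairing, and is not needed since $\prod_i f_{A_iB_i}$ depends only on the sets $\{A_i\}$, $\{B_i\}$).
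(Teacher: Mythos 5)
Your overall strategy---pass to the Schwarz double, count zeros and poles there, compare $h=z^2q$ with the explicit product $h_0=\prod_i f_{A_iB_i}$, and finish with a Liouville-type theorem plus sign bookkeeping---is sound, and it is essentially a fleshed-out version of what the paper intends (the paper's proof is one line: ``identical to that of the previous one,'' i.e.\ compare with the explicit candidate and apply its Lemma that a zero-free meromorphic function on $\mathbb{A}$, positive on $\Gamma$, is constant). You are also right that positivity of $Q$ on $\Gamma$ must be added as a hypothesis. However, there is a genuine circularity at the step ``hence $h/h_0$ is an elliptic function \dots by Liouville's theorem on $T$ it equals a constant.'' Having the same divisor on $T$ as the elliptic function $h$ does \emph{not} make $h_0$ (or $h/h_0$) a function on $T$: from the paper's identity $P(r^2z)=-P(z)/z$ one computes
\begin{equation*}
    f_{AB}(r^2z)=\frac{A\bar{B}}{\bar{A}B}\,f_{AB}(z),
    \qquad\text{hence}\qquad
    h_0(r^2z)=\frac{\Lambda}{\bar{\Lambda}}\,h_0(z),
    \qquad \Lambda:=\prod_{i}\frac{A_i}{B_i},
\end{equation*}
so $h_0$ descends to the torus precisely when $\Lambda\in\RR$ --- which is the very fact you propose to extract \emph{a posteriori}. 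As written, Liouville on $T$ cannot be invoked, and the final sign argument (which presupposes $h=\lambda h_0$ with $\lambda$ a constant) has nothing to rest on; the needed reality, the constancy of the ratio, and your ``residual'' positivity are locked in a circle.

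The gap is fixable by one standard extra step, after which your argument goes through verbatim. Either apply Abel's theorem in multiplicative form to $h$ itself, which genuinely \emph{is} elliptic: its zeros are $A_i,1/\bar{A}_i$ and its poles are $B_i,1/\bar{B}_i$, so the product of zeros over the product of poles, namely $\Lambda/\bar{\Lambda}$, must lie in $r^{2\ZZ}$, and unimodularity forces $\Lambda=\bar{\Lambda}$. Or, equivalently, observe that $g:=h/h_0$ is a zero-free holomorphic function on $\CC^*$ with $g(r^2z)=(\bar{\Lambda}/\Lambda)g(z)$; writing $g=cz^me^{u}$ with $u$ single-valued on $\CC^*$ and expanding $u$ in a Laurent series shows $g=cz^m$ with multiplier $r^{2m}$, and $|\bar{\Lambda}/\Lambda|=1$ forces $m=0$. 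Either route yields simultaneously that $\Lambda$ is real and that $h/h_0$ is constant; your evaluations on $|z|=r$ and $|z|=1$ then correctly give $\lambda=-C^2<0$ and $\Lambda>0$. (For comparison, the paper's own route stays on the annulus and quotes its Lemma directly, but it silently faces the same issue: the Lemma requires the ratio to be positive on \emph{both} boundary circles, which again needs the reality of $\Lambda$ first, so your explicit treatment of this point, once repaired, is a genuine improvement in rigor.)
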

The proof of this proposition is identical to that of the previous one. 
 In particular, it will be convenient for us to choose $\prod_i (A_i/B_i) = 1$. We now proceed to discuss the case of the doubly connected droplet. Suppose the droplet is doubly connected, and let
 $\Omega$ denote its exterior. Suppose $F$ is a function with a single simple pole at infinity, which is in class 
 $E^1$ near the boundary of $\Omega$. Since $\Omega$ is doubly connected, there exists a conformal map $\varphi : \mathbb{A} \to 
 \Omega$, for some annulus $\mathbb{A} = \{z\mid r < |z| < 1\}$, $0<r<1$; without loss of generality, $\varphi(-x) = \infty$, for some 
 $r < x < 1$. As before, we consider the quadratic differential $Q = F^2(z) dz^2$ on the complement to the droplet; 
 pulling back to the annulus, we see that $Q$ is a quadratic differential with a single simple pole in $\mathbb{A}$ at $-x$, 
 and is positive on $\partial \mathbb{A}$; thus, by Proposition \eqref{torus-prop}, we find that
     \begin{equation}\label{torus-QD}
         Q = F^2(\varphi(w)) \varphi'(w)^2 dw^2 = - f_{A_1,-x}^2(w) f_{A_2,-x}^2(w) f_{A_3,-x}^2(w) \frac{dw^2}{w^2}.
     \end{equation}
 Thus, we can conclude that
     \begin{equation}
         |w \varphi'(w)| = \frac{|P(\bar{A_1}w)P(\bar{A_2}w)P(\bar{A_3}w)|^2}{|P(-xw)|^6} \qquad \text{a.e. } \partial \mathbb{A}.
     \end{equation}
 As before, we can recover $\varphi'$ from its a.e. boundary values:
     \begin{equation} \label{2conn-phiprime}
         \varphi'(w) = \frac{1}{w}\frac{P(\bar{A_1}w)^2P(\bar{A_2}w)^2P(\bar{A_3}w)^2}{P(-xw)^6}
     \end{equation}
Since we are only looking for a particular solution, we will choose $A_1 = -x$, $A_2 = -xe^{\pi i/3}$, and 
$A_3 = -xe^{-\pi i/3}$, which clearly satisfy the constraint equation $\prod_i (A_i/B_i) = 1$. Equation \eqref{2conn-phiprime} then reads
    \begin{equation}
         \varphi'(w) = \frac{1}{w}\frac{P(-xe^{-\pi i/3}w)^2P(-xe^{\pi i/3}w)^2}{P(-xw)^4}.
    \end{equation}
Using equation \eqref{theta-identity}, we can rewrite the above
in the variable $\tau = -\log w$ as a ratio of theta functions:
    \begin{align}
        \varphi'(w) &=  e^{-\tau}\frac{\Theta_1(i\tau/2 + \frac{2\pi }{3} - i/2\log x,r)^2\Theta_1(i\tau/2 + \frac{\pi}{3} - i/2\log x,r)^2}{\Theta_1(i\tau/2 - i/2\log x +\pi/2,r)^4}\\
        & =e^{-\tau}\left[ \frac{\Theta_1\left(\frac{i}{2}(-\tau+\log{x}),r\right)}{\Theta_2\left(\frac{i}{2}(-\tau+\log{x}),r\right)}\right]^4.
    \end{align}
(The last expression is an identity satisfied by $\Theta_1$). Thus, $\varphi$ (in the variable $\tau$) is
    \begin{equation} \label{theta-integral}
        \varphi(\tau) = -\int^\tau \left[ \frac{\Theta_1\left(\frac{i}{2}(-\tau+\log{x}),r\right)}{\Theta_2\left(\frac{i}{2}(-\tau+\log{x}),r\right)}\right]^4 d\tau.
    \end{equation}
At this point, the authors' knowledge of theta function identities is exhausted, and we were unable to carry out the 
computation any further. However, this is not to say than an explicit formula does not exist; there is likely a classical 
formula for \eqref{theta-integral}; any suggestions on how to compute \eqref{theta-integral} would be greatly appreciated by the 
authors.
\begin{remark}
    It should be noted that, even after computing this integral, a solution to the droplet equation may still not exist, as it may 
    turn out that there is no choice of the remaining parameters for which a single-valued univalent map is defined. However, the 
    procedure of the previous section still applies: try to find a solution to the equation $F(z) = \dot{\bar{z}}$ with additional 
    poles inside the domain, and then `cancel' the poles using the Schwarz function of the domain. Computationally speaking, this 
    amounts to adding in additional factors of $f_{AB}(z)$ to the quadratic differential in \eqref{torus-QD}.
\end{remark}

The procedure in cases of multiple components should be very similar: begin with an ansatz for the connectivity of the complement of the droplet, say $g$, and write down the appropriate quadratic differential in the complement to the droplet. Then, map the complement back 
to a `canonical' domain (either the unit disc with a number of discs removed, or the plane with slits, or some other choice 
where computations can be performed; see Figure \ref{annulus2droplet}). Because the quadratic differential in question is positive on the boundary of the domain, 
it can be continued to its double, and thus defines a quadratic differential with prescribed poles on a Riemann surface of 
genus $g$. Quadratic differentials with this reality condition imposed on the boundary, can
be written down in terms of canonical factors on the Riemann surface. If we are using the model of disc with disc holes,
it is likely that such quadratic differentials can be expressed explicitly in terms of Schottky-Klein prime functions, as was
the case in the 1- and 2- connected cases. If instead we use the slit plane, since the Riemann surface we are considering carries an antiholomorphic involution\footnote{A Riemann surface carrying antiholomorphic involution is called a \textit{real Riemann surface}; 
such Riemann surfaces are well-studied in the theory of integrable systems, cf. \cite{BBEIM}, Chapter 3.}, and thus, it makes sense to consider (in the interest of finding exact solutions) the hyperelliptic case.
This is something we plan to consider in future work.

\begin{figure} 
    \centering
    \includegraphics{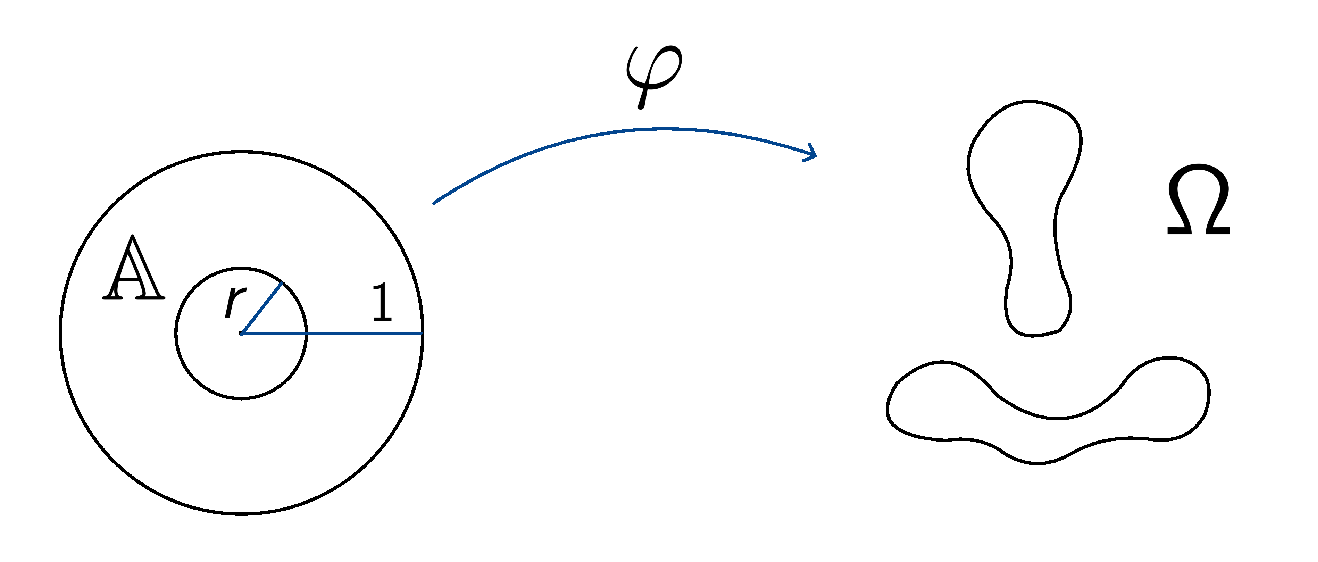}
    \caption{A map from a canonical domain, in this case the annulus, to the complement of a droplet with two connected 
    components.}
    \label{annulus2droplet}
\end{figure}

It should be noted that several solutions with multiple connected components have already been discovered by Crowdy 
\cite{DC-01,DC-99}, using the prime function mentioned above. However, these solutions are distinct, in the sense that they do 
not appear as extensions of the connected droplets derived in the previous sections of this work. It would be interesting still
to extend the connected solutions, such as the droplet in \S4, to a two-component solution, which depends continuously on the
parameter $c$. Some recent progress has been achieved \cite{DNK-20,DNK-21} in describing this transition, but there is still work
to be done; we are hopeful that our method could shed some light on this problem.

\appendix
\section{Quadratic differentials on the sphere.}\label{A}
Here, we give a brief overview of a few results on quadratic differentials that we will need in the
above text. We sacrifice maximal generality here for clearer exposition, as most of the results utilized in the text do not
require much machinery. For full details about the theory of quadratic differentials, the reader should consult
\cite{S}. 

A \textit{quadratic differential} on the Riemann sphere $\hat{\mathbb{C}}$ is an expression $Q$ of the form
	\begin{equation} \label{QD}
		Q = Q(z)dz^2,
	\end{equation}
in the local coordinate $z$ near zero, where $Q$ is meromorphic on the sphere. Under a change of coordinates
$z \to z(w)$, the quadratic differential \eqref{QD} transforms as
	\begin{equation}
		Q = Q(z)dz^2 = Q(z(w)) \left(\frac{dz}{dw}\right)^2 dw^2.
	\end{equation}
In particular, if we make the coordinate transformation $z = 1/w$,
	\begin{equation}
		Q = Q(1/w)w^{-4}dw^2,
	\end{equation}
which implies that $Q$ must be a rational function, since it must be meromorphic on the entire sphere. 
The structure of quadratic differentials is fairly restrictive; given only a little information about $Q$ on, 
say, a curve, is enough to uniquely reconstruct $Q$. This is made
evident by the following proposition, which we make reference to in \S2.
\begin{prop}
	Suppose $Q$ is a quadratic differential on $\hat{\mathbb{C}}$ with the following properties:
		\begin{enumerate}
			\item The only pole of $Q$ in $\mathbb{C}$ is at $w = 0$, where it has a pole of order $N$.
			\item $Q$ is positive definite on the circle $\{w\mid|w| = 1\}$.
		\end{enumerate}
	Then, there exist constants $c > 0$, $|A_k| < 1$, $k = 1,...,N-2$, such that
		\begin{equation}
			Q = -c \frac{dw^2}{w^2}\prod_{k=1}^{N-2}(w-A_k)(\frac{1}{w}-\bar{A}_k).
		\end{equation}
\end{prop}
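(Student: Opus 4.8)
The plan is to determine the structure of $Q$ by combining its meromorphy on the entire sphere (which forces it to be rational) with the positivity condition on the unit circle (which imposes a symmetry constraint via Schwarz reflection). First I would record, as established in the discussion preceding the statement, that under $z = 1/w$ any quadratic differential on $\hat{\mathbb{C}}$ pulls back to $Q(1/w)w^{-4}dw^2$; hence $Q(w)$ must be a rational function of $w$. Since the only finite pole is at $w = 0$ of order $N$, the denominator of $Q(w)$ is exactly $w^N$, and the numerator is a polynomial whose degree is controlled by the requirement that $Q$ be meromorphic at $w = \infty$ as a \emph{quadratic differential}. Converting to the coordinate at infinity, the order of $Q$ at $\infty$ is $4$ less than the naive degree count; to have no pole at infinity beyond what is permitted, the numerator must have degree at most $2N-4$, so we may write
\begin{equation}
    Q = \frac{R(w)}{w^N}dw^2,
\end{equation}
where $R$ is a polynomial of degree $\leq 2N-4$.

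Next I would exploit the positivity on $|w| = 1$. The condition that $Q$ be positive definite on the unit circle means that for $|w| = 1$, writing $w = e^{i\theta}$, the quantity $Q(e^{i\theta})e^{2i\theta}$ is real and positive (since $dw^2 = -e^{2i\theta}d\theta^2$ on the circle, up to a sign that we track carefully). The key manipulation is the substitution $\bar{w} = 1/w$ valid on the circle: this lets me rephrase reality of $Q(w)w^2$ on $|w|=1$ as a functional equation relating $Q(w)$ to $\overline{Q(1/\bar{w})}$, i.e. to the reflected differential. Concretely, the positivity forces $Q(w)$ to be invariant (as a quadratic differential) under the reflection $w \mapsto 1/\bar{w}$, which means every zero $A_k$ of the numerator in the open disc is paired with a zero at $1/\bar{A}_k$ outside, and the multiplicities match. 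This is the step I expect to be the main obstacle: turning the pointwise positivity statement into the clean reflection symmetry of the rational function requires carefully keeping track of the $dw^2$ factor's phase and arguing that the positivity (not merely reality) rules out the poles accumulating on or inside the circle at the wrong places, and pins down the overall sign.

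Having established the reflection symmetry, the zeros of $R(w)$ come in conjugate-reciprocal pairs $(w - A_k)(w - 1/\bar{A}_k)$ with $|A_k| < 1$, while the pole of order $N$ at $w = 0$ is matched by the forced behavior at $w = \infty$. Counting: $R$ has degree $2N - 4$, giving $N-2$ such pairs, and factoring each pair as $(w - A_k)(w - 1/\bar{A}_k) = -\bar{A}_k^{-1}(w-A_k)(1 - \bar{A}_k w)$, I can absorb constants and rewrite
\begin{equation}
    Q = -c\,\frac{dw^2}{w^2}\prod_{k=1}^{N-2}(w - A_k)\left(\frac{1}{w} - \bar{A}_k\right),
\end{equation}
where the positive constant $c > 0$ is fixed by evaluating the positivity condition at a single convenient point on the circle (for instance $w = 1$), which determines both the magnitude and, crucially, the correct sign of the leading constant. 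Finally I would verify that this candidate indeed satisfies both hypotheses — a pole of order exactly $N$ at the origin and positivity on $|w|=1$ — which follows immediately since each factor $(w-A_k)(1/w - \bar{A}_k)$ restricted to $|w|=1$ equals $|1 - \bar{A}_k w|^2/\bar{w}$ up to sign, completing the identification.
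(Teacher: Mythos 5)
Your proposal is correct in substance and follows essentially the same route as the paper: both arguments reduce the proposition to the structure theory of a rational function with poles only at the origin that is positive on the unit circle. The one real difference is how that key fact is handled. The paper simply cites it as a classical result (Rudin, Ch.~14, Exercise~4): such a function must be of the form $c\prod_{k}(w-A_k)(1-\bar{A}_k w)\,w^{-m}$ with $c>0$, $|A_k|<1$. You instead re-derive it via Schwarz reflection, noting that positivity of $-w^2Q(w)$ on $|w|=1$ forces invariance of $Q$ under $w\mapsto 1/\bar{w}$ and hence the conjugate-reciprocal pairing of zeros; this is a legitimate, self-contained substitute for the citation, at the cost of the phase bookkeeping you correctly identify as the delicate point. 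Note also that \emph{strict} positivity is what excludes zeros on the circle itself and gives $|A_k|<1$ rather than $|A_k|\le 1$; this deserves a sentence in either treatment.

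One step, however, is unjustified as you wrote it: the initial claim that meromorphy of $Q$ at $w=\infty$ bounds the numerator degree by $2N-4$. Every rational function is meromorphic at infinity, and hypothesis (1) constrains poles only in the finite plane $\mathbb{C}$; nothing in the hypotheses caps the order of the pole at $\infty$ a priori (indeed the final answer has a pole of order $N$ there), so ``no pole at infinity beyond what is permitted'' is circular. The gap is healed by your own later step: once invariance under $w\mapsto 1/\bar{w}$ is established, the order-$N$ pole at $0$ is carried to a pole of order exactly $N$ at $\infty$, which forces the numerator degree to be exactly $2N-4$. You should therefore delete the a priori degree bound and instead extract the degree count as a consequence of the reflection symmetry; with that reordering, the argument is complete.
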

	\begin{proof}
		The proof of this proposition relies only on elementary complex analysis. Let $Q = Q(w)dw^2$, for some
		rational function $Q(w)$. Assuming the hypotheses of the proposition, we note that we can write
		$Q(w) dw^2 = -P(w) w^{-N} dw^2$, for some \textit{polynomial} $P(w)$, (this follows from the fact that
		the only pole of $Q$ in $\mathbb{C}$ is at $w=0$). Now, since $-w^{-2}dw^2 = d\theta^2 >0$, we see that
		the second hypothesis of proposition implies that, for $|z| = 1$,
		$0 < -P(w) w^{-N} dw^2 = P(w) w^{-N+2} d\theta^2$. This in turn implies that $P(w)w^{-N+2} > 0$ on the
		circle; by a very classical result of complex analysis (cf. \cite{R}, exercise 4, chapter 14, for example),
		a rational function with poles only at zero which is positive on the circle necessarily takes the form
			\begin{equation*}
				P(w)w^{-N+2} = c\prod_{k=1}^{N-2}(w-A_k)(1-\bar{A}_k w)w^{-N+2},
			\end{equation*}
		for some constants $c > 0$, $|A_k| < 1$, $k = 1,...,N-2$, i.e.,  
		$P(w) = c\prod_{k=1}^{N-2}(w-A_k)(1-\bar{A}_k w)$. Simplifying this expression completes the proof of the proposition.
	\end{proof}
In short, this proposition tells us the exact form of a quadratic differential which is positive on the circle,
and has prescribed poles. By an identical method, one can prove the following proposition:
\begin{prop}
	Suppose $Q$ is a quadratic differential on $\hat{\mathbb{C}}$ with the following properties:
		\begin{enumerate}
			\item The only poles of $Q$ in the disc are poles of order $n_j$ at points 
			$\zeta_j \in \mathbb{D}$, $j = 1,...,M$, with $N = \sum_{j=1}^M n_j$, 
			\item $Q$ is positive definite on the circle $\{w\mid|w| = 1\}$.
		\end{enumerate}
	Then, there exist constants $c > 0$, $|A_k| < 1$, $k = 1,...,N-2$, such that
		\begin{equation}
			Q = -c\prod_{k=1}^{N-2}(w-A_k)(1-\bar{A}_k w) 
			\prod_{j=1}^M(w-\zeta_j)^{-n_j}(1-\bar{\zeta}_j w)^{-n_j} dw^2.
		\end{equation}
\end{prop}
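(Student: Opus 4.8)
The plan is to mimic the proof of the preceding proposition, reducing the statement to the same classical factorization of a rational (Laurent) polynomial that is positive on the unit circle. The only genuinely new ingredient, relative to the single-pole case, is that with the poles now distributed among several interior points $\zeta_j$ we must first exploit positivity on $\partial\mathbb{D}$ to \emph{locate} the poles of $Q$ in the exterior region $|w|>1$. Once the full pole divisor is known, the argument is word-for-word the same as before, which is presumably why the authors say it follows ``by an identical method.''

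First I would record the reflection symmetry. Writing $w=e^{i\theta}$ on the unit circle gives $dw^2=-w^2\,d\theta^2$, so the hypothesis that $Q=Q(w)\,dw^2$ is positive definite there is precisely the statement that $g(w):=-w^2Q(w)$ is real and strictly positive for $|w|=1$. Since $g$ is rational, realness on the circle forces the functional identity $g(w)=\overline{g(1/\bar w)}$ (the two rational functions agree on $|w|=1$, hence everywhere). Consequently the divisor of $g$ — and, up to the behaviour at the fixed locus $\{0,\infty\}$ of the involution $w\mapsto 1/\bar w$, that of $Q$ — is invariant under this involution: a pole (resp. zero) of order $m$ at $b$ is matched by one of order $m$ at $1/\bar b$. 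Strict positivity rules out poles on the circle itself, and any pole of $Q$ in $|w|>1$ must be a reflection $1/\bar\zeta_j$ of one of the prescribed interior poles, since otherwise its reflection would be an unlisted pole inside $\mathbb{D}$, contradicting hypothesis (1). Hence the complete pole set of $Q$ on $\hat{\mathbb{C}}$ is $\{\zeta_j,\,1/\bar\zeta_j\}$ with orders $n_j$ (with $0$ and $\infty=1/\bar 0$ self-paired, as already happens in the single-pole proposition).

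Next I would clear these poles and invoke the factorization. On $|w|=1$ one has $(w-\zeta_j)(1-\bar\zeta_j w)=w\,|w-\zeta_j|^2$, so multiplying $Q$ by the reflection-symmetric rational factor $\prod_{j}(w-\zeta_j)^{n_j}(1-\bar\zeta_j w)^{n_j}$ produces a quadratic differential whose only pole in $\mathbb{C}$ is at $0$ and which is still positive on the circle. This is exactly the situation of the first proposition of this appendix, and the cited classical result — a rational function with poles only at the origin that is positive on $\partial\mathbb{D}$ equals a positive constant times $\prod_k(w-A_k)(1-\bar A_k w)\,w^{-(\cdots)}$ with $|A_k|<1$ — applies verbatim. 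Reading this factorization back through the clearing factor recovers the asserted form. The count of exactly $N-2$ pairs $(w-A_k)(1-\bar A_k w)$ then follows from the degree constraint $\deg(\operatorname{div}Q)=-4$ for a quadratic differential on $\hat{\mathbb{C}}$: the $2N$ interior/exterior poles together with the order-$4$ pole of $dw^2$ at infinity must be balanced by $2(N-2)+4$ zeros, forcing exactly $N-2$ conjugate-reciprocal pairs, just as in the first proposition.

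The hard part is the reflection step: making precise that positivity on $\partial\mathbb{D}$ pins down the exterior poles, and in particular bookkeeping the self-conjugate points $0$ and $\infty$ (where the extra factor $w^2$ lives and where the pole at $\infty=1/\bar 0$ is carried implicitly by $dw^2$ and the degree balance rather than by an explicit $(1-\bar\zeta_j w)^{-n_j}$ factor) without miscounting orders. Once the symmetric pole divisor is in hand, the clearing reduces everything to the already-established single-pole case and to the Fej\'er--Riesz-type factorization, so no new analytic input is needed; the remaining work is the routine verification producing the exponent $N-2$ and the constants $|A_k|<1$, $c>0$.
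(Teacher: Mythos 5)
Your overall strategy --- use positivity on $|w|=1$ to get the reflection identity $g(w)=\overline{g(1/\bar w)}$ for $g(w):=-w^2Q(w)$, conclude that the exterior poles of $Q$ sit at $1/\bar\zeta_j$ with matching orders, then clear the finite poles and invoke the classical factorization underlying the first proposition --- is exactly the ``identical method'' the paper alludes to, and your reflection step is correct. The flaw is in the clearing step. On $|w|=1$ one has $\prod_j(w-\zeta_j)^{n_j}(1-\bar\zeta_j w)^{n_j}=w^{N}\prod_j|w-\zeta_j|^{2n_j}$, so your factor carries the phase $w^{N}$: the product of $Q$ with it is \emph{not} positive on the circle (under either reading, as a function or as a quadratic differential), and positivity is precisely the hypothesis the cited Fej\'er--Riesz-type result cannot do without. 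Worse, your factor cancels \emph{every} finite pole of $Q$, including any pole at the origin (if some $\zeta_{j_0}=0$, the factor contains $w^{n_{j_0}}$), so the intermediate object is a polynomial times $dw^2$ with no pole at $0$ at all --- contrary to your claim that its ``only pole in $\mathbb{C}$ is at $0$.'' Thus the first proposition, which manufactures its $N-2$ factors out of a pole of order $N$ at the origin, does not apply to it ``verbatim''; this is not merely the $0,\infty$ bookkeeping you defer, but a failure of both hypotheses of the result you want to quote.

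The repair is small but essential: clear the poles with $\prod_{j}\bigl[(w-\zeta_j)(1-\bar\zeta_j w)\,w^{-1}\bigr]^{n_j}$ instead (the factors with $\zeta_j=0$ are identically $1$). On the circle this equals $\prod_j|w-\zeta_j|^{2n_j}>0$, so positivity of the quadratic differential survives, and the product now has a single pole in $\mathbb{C}$, at $w=0$, of order exactly $N$ in the generic case (a zero of $Q$ at the origin only produces factors with $A_k=0$, which the stated form accommodates). The first proposition then applies with this same $N$ and hands you the $N-2$ pairs $(w-A_k)(1-\bar A_k w)$ directly --- making your separate degree count via $\deg(\operatorname{div}Q)=-4$ unnecessary --- and dividing back through the corrected factor reproduces the asserted formula, including the factors $w^{-n_j}$ when some $\zeta_j=0$.
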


\section*{Acknowledgements.} 
We would like to thank Dmitry Khavinson for introducing this problem to us, and for his helpful remarks while 
editing this work. We also offer our sincerest gratitude to the referees for their extremely valuable comments, and for 
providing a number of informative references.

\end{document}